\newcommandx{\td}[2][1=]{\todo[linecolor=orange,backgroundcolor=orange!25,bordercolor=orange,#1]{#2}}
\newtheorem{theorem}{Theorem}[section]
\newtheorem{lemma}[theorem]{Lemma}
\newtheorem{proposition}[theorem]{Proposition}
\newtheorem{corollary}[theorem]{Corollary}
\newcommand{\RR}{\mathbb{R}}
\newcommand{\FF}{\mathbb{F}}
\newcommand{\cP}{\mathcal{P}}
\renewcommand{\epsilon}{\varepsilon}
\DeclareMathOperator{\sign}{sign}
\newcommand{\hermitianpol}[1]{\prescript{2}{}{A}_{{#1}}}
\newcommand{\hyperbolichalf}{\tfrac{1}{2}D_m}
\newcommand{\elliptic}[1]{\prescript{2}{}{D}_{{#1}}}
\DeclareMathOperator{\rk}{rank}
\newcommand{\qbin}[2]{\genfrac{[}{]}{0pt}{}{{#1}}{{#2}}_q}
\newcommand{\bbin}[2]{\genfrac{[}{]}{0pt}{}{{#1}}{{#2}}_b}
\newcommand{\pbin}[2]{\genfrac{[}{]}{0pt}{}{{#1}}{{#2}}_p}
\newcommand{\qqbin}[2]{\genfrac{[}{]}{0pt}{}{{#1}}{{#2}}_{q^2}}
\newcommand{\gaussian}[3]{\genfrac{[}{]}{0pt}{}{{#2}}{{#3}}_{{#1}}}
\newcommand{\poch}[3]{({#1};{#2})_{{#3}}}
\newcommand{\pochq}[2]{\poch{{#1}}{q}{{#2}}}
\newcommand{\pochb}[2]{({#1})_{{#2}}}
\newcommand{\hyp}[3]{\,\mbox{}_{3}\phi_{2}
	\left(\!\!\left.\begin{array}{c}#1\\#2\end{array}\right|#3\right)}
\newcommand{\hyper}[5]{\,\mbox{}_{{#1}}\phi_{{#2}}
	\left(\!\!\left.\begin{array}{c}#3\\#4\end{array}\right|#5\right)}
\newcommand{\qandq}{\quad\text{and}\quad}
\newcommand{\qforall}{\quad\text{for all }}
\title{Packings and Steiner systems in polar spaces}
\author{Kai-Uwe Schmidt and Charlene Wei\ss}
\address{Department of Mathematics, Paderborn University, Warburger Str.\ 100, 33098 Paderborn, Germany.}
\email[K.-U. Schmidt]{kus@math.upb.de}
\email[C. Wei\ss]{chweiss@math.upb.de}
\date{13 March 2022 (revised 20 December 2022)} 
\subjclass[2010]{Primary 51E23; Secondary 05E30, 33C80} 
\begin{document}
\begin{abstract}
A finite classical polar space of rank $n$ consists of the totally isotropic subspaces of a finite vector space equipped with a nondegenerate form such that $n$ is the maximal dimension of such a subspace. A $t$-Steiner system in a finite classical polar space of rank $n$ is a collection $Y$ of totally isotropic $n$-spaces such that each totally isotropic $t$-space is contained in exactly one member of $Y$. Nontrivial examples are known only for $t=1$ and $t=n-1$. We give an almost complete classification of such $t$-Steiner systems, showing that such objects can only exist in some corner cases. This classification result arises from a more general result on packings in polar spaces.
\end{abstract}
\maketitle

\section{Introduction}

A \emph{$t$-Steiner system} is a collection $Y$ of $n$-subsets of a $v$-set $V$ such that each $t$-subset of~$V$ is contained in exactly one member of~$Y$. The long-standing existence question for $t$-Steiner systems has been settled recently: it was shown in~\cite{Kee2014} and~\cite{GloKuhLoOst2016} that, for all $t\le n$ and all sufficiently large $v$, a $t$-Steiner system exists, provided that some natural divisibility conditions are satisfied.
\par
It is well known that combinatorics of sets can be regarded as the limiting case $q\to 1$ of combinatorics of vector spaces over the finite field $\FF_q$. Indeed, following~\cite{Cam1974} and~\cite{Del1978}, a \emph{$t$-Steiner system over $\FF_q$} is a collection~$Y$ of $n$-dimensional subspaces ($n$-spaces for short) of a $v$-space~$V$ over $\FF_q$ such that each $t$-space of~$V$ is contained in exactly one member of~$Y$. It is remarkable that, in the nontrivial case $1<t<n<v$, Steiner systems over $\FF_q$ are only known for a single set of parameters~\cite{BraEtzOstVarWas}, namely for $(t,n,v)=(2,3,13)$ and $q=2$.
\par
We may consider these objects as $q$-analogs of Steiner systems of type~$A_{n-1}$, as $V$ together with the action of $GL(n,q)$ is of this type. We study $q$-analogs of Steiner systems in finite vector spaces of types $\hermitianpol{2n-1}$, $\hermitianpol{2n}$, $B_n$, $C_n$, $D_n$, and $\elliptic{n+1}$ (using the notation of~\cite{Carter}). In each case, the space $V$ (defined over $\FF_{q^2}$ for $\hermitianpol{2n-1}$ and $\hermitianpol{2n}$) is equipped with a nondegenerate form and the relevant groups are $U(2n,q^2)$, $U(2n+1,q^2)$, $O(2n+1,q)$, $Sp(2n,q)$, $O^+(2n,q)$, and $O^-(2n+2,q)$, respectively. The notation is chosen such that the maximal totally isotropic subspaces of $V$, called \emph{generators}, have dimension $n$ (see Table~\ref{table:polarspaces}). A collection of all totally isotropic subspaces with respect to a given form is a \emph{finite classical polar space} (or just \emph{polar space}) of \emph{rank} $n$ and we denote this space by the same symbol as the type of the underlying vector space. A \emph{$t$-Steiner system} (of $n$-spaces) in a polar space~$\cP$ of rank $n$ is a collection~$Y$ of generators in~$\cP$ such that each totally isotropic $t$-space of~$V$ is contained in exactly one member of~$Y$. These objects are sometimes called \emph{regular systems} or \emph{$1$-regular systems} in the literature.
\par
A $1$-Steiner system in a polar space is known as a \emph{spread}, whose existence question has been studied for decades (see~\cite{Seg1965},~\cite{Dye1977},~\cite{Tha1981},~\cite{Kan1982a},~\cite{Kan1982b}, \cite{CalCamKanSei1997}, for example), but is still not fully resolved (see~\cite[\S~7.4]{HirschfeldThas} for the current status). The only other known nontrivial $t$-Steiner systems in polar spaces occur for $t=n-1$ in $D_n$ and equal one of the two bipartite halves of $D_n$ (see Section~\ref{sec:association schemes}), which are the two orbits under $SO^+(2n,q)$ acting on the generators of $D_n$~\cite[Thm.~11.61]{Taylor}.
\par
We prove the following classification result. 
\begin{theorem}\label{thm:Steinersystems_dualpolarspaces}
	Suppose that a polar space $\cP$ of rank $n$ contains a $t$-Steiner system with $1<t<n$. Then one of the following holds
	\begin{enumerate}[(1)]
		\item $t=2$ and $\cP=\hermitianpol{2n}$ or $\elliptic{n+1}$ for odd $n$,
		\item $t=n-1$ and $\cP=\hermitianpol{2n}$ or $\elliptic{n+1}$ for $q\geq 3$, or $\cP=D_n$.
	\end{enumerate} 
\end{theorem}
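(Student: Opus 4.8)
The plan is to place the problem inside the dual polar association scheme on the generators of $\cP$ and to exploit two \emph{complementary} extremal properties that a $t$-Steiner system is forced to have. Write $X$ for the set of generators, $|X|=N$, and let $A_0,\dots,A_n$ be the relation matrices, where $A_i$ records the pairs of generators meeting in dimension $n-i$; this scheme is $Q$-polynomial with primitive idempotents $E_0,\dots,E_n$, and its eigenvalues are expressible through the basic ($_3\phi_2$) hypergeometric series attached to the parameter $e$ of $\cP$. Let $\chi_Y\in\RR^X$ be the characteristic vector of a $t$-Steiner system $Y$. Two observations drive everything. First, if $G\neq G'$ lie in $Y$ and $\dim(G\cap G')\ge t$, then any totally isotropic $t$-space inside $G\cap G'$ would lie in two members of $Y$; hence $\dim(G\cap G')\le t-1$, so the inner distribution $(a_i)$ of $Y$ satisfies $a_i=0$ for $1\le i\le n-t$ (a ``code'' condition). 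Second, since every totally isotropic $t$-space lies in exactly one member of $Y$, the set $Y$ is a $t$-design, so by the theory of designs in $Q$-polynomial schemes its dual distribution satisfies $a^*_j=0$ for $1\le j\le t$, equivalently $E_j\chi_Y=0$ for $1\le j\le t$. (This last fact can also be seen directly: the Gram matrix $\sum_i\gaussian{q}{n-i}{t}A_i$ of the $t$-shadow map lies in the Bose--Mesner algebra, has nonzero eigenvalue on the eigenspace of $E_j$ precisely for $j\le t$, and sends $\chi_Y$ into the span of $\mathbf 1$.)

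The force of these two conditions is that they are dual: the inner distribution is supported on $\{0\}\cup\{n-t+1,\dots,n\}$ while the dual distribution is supported on $\{0\}\cup\{t+1,\dots,n\}$. The eigenvalue transform linking the two then yields $n$ linear relations among the $n+1$ entries $a_0,\dots,a_n$, which together with the normalization $a_0=1$ determine the entire inner (and dual) distribution of $Y$ explicitly as a function of $q,n,t$ and $e$ alone; moreover the size $|Y|=N_t/\gaussian{q}{n}{t}$, where $N_t$ is the number of totally isotropic $t$-spaces, is prescribed by double counting and must be consistent with this solution. Thus the existence of a $t$-Steiner system reduces to checking that the forced distribution is \emph{admissible}: that $a_i\ge 0$ and that $|Y|\,a_i\in\mathbb Z_{\ge 0}$ for every $i$. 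I would package this as a general packing statement -- any $t$-packing has its distribution governed by the same linear algebra, with the Steiner system arising as the equality case -- and then specialize.

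The main obstacle is the final sign analysis: one must show that for $1<t<n$ at least one forced component $a_i$ (equivalently one $\|E_j\chi_Y\|^2$) is negative \emph{outside} the listed corner cases. I expect the decisive quantity to be the projection onto a single extremal eigenspace (such as that of $E_{t+1}$ or $E_n$), whose nonnegativity reduces, after substituting the explicit $q$-hypergeometric eigenvalues, to an inequality in $q^n$, $q^t$, and $q^e$. For intermediate strengths $3\le t\le n-2$ this inequality should fail for every type, ruling those out entirely; at the two extreme strengths the borderline is genuinely parameter-sensitive, and this is exactly where the stated exceptions arise -- for $t=2$ the critical inequality hinges on the parity of $n$, surviving only for $\hermitianpol{2n}$ and $\elliptic{n+1}$ with $n$ odd, while for $t=n-1$ it degenerates to a condition on $q$, surviving for $q\ge 3$ in those two types and identically for $D_n$, whose parameter $e=0$ admits the bipartite-half construction of the introduction. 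The remaining finitely many small configurations at the parity and small-$q$ boundaries I would settle by direct evaluation. The genuine work lies in establishing the sign of these $q$-hypergeometric expressions uniformly in $n$: the eigenvalues are alternating sums for which naive bounding is too lossy, so controlling them -- presumably via a closed ($_3\phi_2$) evaluation or a monotonicity argument in $q$ -- is where the bulk of the argument will concentrate.
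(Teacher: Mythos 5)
Your core mechanism is sound and matches the machinery the paper actually uses: the observation that a $t$-Steiner system is simultaneously an $(n-t+1)$-code (inner distribution vanishing on $1,\dots,n-t$) and a $t$-design ($A'_1=\cdots=A'_t=0$), so that the full inner and dual distributions are forced, is precisely the paper's Proposition~\ref{prop:tSteiner_tdesign_innerdistr}, and your shadow-map justification of the design condition is equivalent to the identity~\eqref{eq:identity_Pnum_polar} (whose right-hand side vanishes exactly when $k>j$). Your plan to detect nonexistence via a negative forced entry $A'_k$ is exactly how the paper settles its corner cases (C7)--(C9), and by linear-programming duality it is in principle as strong as the paper's main route for (C2)--(C6), where the authors instead prove an explicit $d$-code bound (Theorem~\ref{thm:bound_herm_hyp}, via the $q$-Hahn multiplier identity of Lemma~\ref{lem:QnumIdentity_Herm_hyp}, bipartite halves of $D_n$, the fused scheme for $B_n,C_n$, and hyperplane embeddings for $\hermitianpol{2n}$ and $\elliptic{n+1}$) and compare it with the Steiner size. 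So for $1<t<n-1$ your sketch is a plausible, if entirely unexecuted, reorganization of the same LP argument; the sign analysis you defer is where all of the paper's nine-case labor sits.

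The genuine gap is the boundary $t=n-1$, which your proposal claims "degenerates to a condition on $q$, surviving for $q\ge 3$" -- but no argument of the kind you propose can deliver the required exclusions there. The theorem needs nonexistence for $t=n-1$ in $\hermitianpol{2n-1}$, $B_n$, $C_n$ for \emph{all} $q$, and in $\hermitianpol{2n}$, $\elliptic{n+1}$ for $q=2$. Any method using only the inner/dual distributions and the scheme's eigenvalues is blind to these distinctions: $B_n$ and $C_n$ have identical $P$- and $Q$-numbers, yet the underlying geometries differ (for instance $C_2$ has spreads while $B_2$ has none for odd $q$), and every scheme parameter is polynomial in $q$, so conditions such as "$q$ odd" or "$q=2$" (needed for $\hermitianpol{4}$ and $\elliptic{4}$) cannot emerge from sign inequalities in $q$-hypergeometric eigenvalues; indeed the paper notes the LP bound is numerically tight in the surviving configurations, so there is no negative entry to find. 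The paper closes this case (its (C1)) by a geometric reduction absent from your proposal: the members of an $(n-1)$-Steiner system through a fixed isotropic point, taken modulo that point, form an $(n-2)$-Steiner system of the same type in rank $n-1$, which telescopes down to spreads in rank $2$ or $2$-Steiner systems in rank $3$, where nonexistence is known from genuinely geometric results (Hirschfeld--Thas for spreads, Panigrahi for $\elliptic{4}$ with $q=2$, Thomas and Cooperstein--Pasini for $C_3$). Without this quotient step and the imported geometric theorems, your approach cannot prove the stated classification.
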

\par
Regarding the possible cases that remain in Theorem~\ref{thm:Steinersystems_dualpolarspaces}, we conjecture that, if  a polar space~$\cP$ of rank $n$ contains a $t$-Steiner system with $1<t<n$, then $t=n-1$ and $\cP=D_n$. The special cases $(n,t)=(4,2)$ and $(n,t)=(5,3)$ in Theorem~\ref{thm:Steinersystems_dualpolarspaces} were recently proved in~\cite{CosMarPavSma2022} and the results in the cases $t=n-1$ are essentially known (see Case (C1) in Section~\ref{sec:Steinerdualpolar}). All other cases appear to be new. In fact we prove a result on packings that is stronger than Theorem~\ref{thm:Steinersystems_dualpolarspaces} in most cases. 
\par
An elementary counting argument shows that the size of a $t$-Steiner system in a polar space necessarily equals the total number of totally isotropic $t$-spaces divided by the number of \mbox{$t$-spaces} contained in a generator. Our proof of Theorem~\ref{thm:Steinersystems_dualpolarspaces} is based on the fact that a set $Y$ of generators in a polar space is a $t$-Steiner system if and only if $Y$ has the correct size and $\dim U\cap W<t$ for all distinct $U,W\in Y$. Accordingly we define a \emph{$d$-code} in a polar space $\cP$ to be a set of generators~$Y$ of $\cP$, such that $n-\dim U\cap W\ge d$ for all distinct $U,W\in Y$ (here $(U,W)\mapsto n-\dim U\cap W$ agrees with the subspace metric used by coding theorists). Our main result, Theorem~\ref{thm:bound_herm_hyp} and Corollary~\ref{cor:bounds_dualpolarspaces}, is a bound on the size of a $d$-code in a polar space, which is sharp up to a constant factor in many cases. The bound is obtained using the concept of an association scheme and the powerful method of linear programming. It is then not hard to show that in most cases the bound is too small for a $t$-Steiner system to exist, eventually leading to Theorem~\ref{thm:Steinersystems_dualpolarspaces}. Numerical evidence suggests that in all cases remaining in Theorem~\ref{thm:Steinersystems_dualpolarspaces} the linear programming bound in the corresponding association scheme equals the size of the corresponding putative Steiner system. Hence it seems that entirely new techniques are required to deal with the remaining cases.
\par
We organize this paper as follows. In Section~\ref{sec:association schemes} we recall relevant facts about association schemes in general and about association schemes arising from polar spaces in particular. In Section~\ref{sec:codesdualpolar} we obtain bounds on the size of $d$-codes and in Section~\ref{sec:Steinerdualpolar} we show that, in most cases, these bounds are smaller than the size of a corresponding Steiner system. Some corner cases will be treated separately.


\section{The association schemes of polar spaces}\label{sec:association schemes}

We start this section with some basic facts about association schemes. For further information, we refer to~\cite{DelsartePhD},~\cite{BanIto1984},~\cite{BrouwerCohenNeumaier}, and~\cite{BannaiIto2021}. We will then introduce polar spaces and their associated association schemes.
\par
A (symmetric) \emph{association scheme $(X,(R_i))$ with $n$ classes} is a finite set $X$ together with $n+1$ nonempty relations $R_0,R_1,\dots,R_n$ such that
\begin{enumerate}
	\item[(A1)] $R_0$ is the identity relation and all $n+1$ relations partition $X\times X$,
	\item[(A2)] each relation is symmetric, that is, if $(x,y)\in R_i$, then $(y,x)\in R_i$,
	\item[(A3)] for every pair $(x,y)\in R_k$, the number of $z\in X$ with $(x,z)\in R_i$ and $(z,y)\in R_j$ is a constant $p_{ij}^k$ depending only on $i$, $j$, and $k$, but not on the particular choice of $(x,y)$.
\end{enumerate}
Let $(X,(R_i))$ be an association scheme with $n$ classes. For each relation $R_i$, the adjacency matrix of the graph $(X,R_i)$ is denoted by $D_i$ (with $D_0$ being the identity matrix). The complex vector space spanned by the matrices $D_0,D_1,\dots,D_n$ is a commutative matrix algebra of dimension $n+1$, called the \emph{Bose-Mesner algebra} of the association scheme. There exists a unique basis of minimal idempotent matrices $E_0(=1/|X|J),E_1,\dots,E_n$ for this algebra, where $J$ denotes the all one matrix. A change of bases yields the existence of unique real numbers $P_i(k)$ and $Q_k(i)$ such that
\[
D_i=\sum_{k=0}^n P_i(k) E_k \qandq E_k=\frac{1}{|X|}\sum_{i=0}^n Q_k(i)D_i.
\]
The numbers $P_i(k)$ and $Q_k(i)$ are called \emph{$P$-numbers} and \emph{$Q$-numbers} of the association scheme $(X,(R_i))$, respectively. Write $v_i=P_i(0)$ and $\mu_k=Q_k(0)$, which are called \emph{valencies} and \emph{multiplicities} of the association scheme, respectively. Indeed $P_i(k)$ is an eigenvalue of the graph $(X,R_i)$, each column of $E_k$ is a corresponding eigenvector, and the rank of $E_k$ equals $\mu_k$. In particular $v_i$ is the valency of the graph $(X,R_i)$. The $P$- and $Q$-numbers satisfy
\begin{gather}
	\mu_k P_i(k)=v_i Q_k(i)\quad\text{for all $i,k=0,1,\dots,n$}\label{eq:AskeyWilsonDuality}\\
	\frac{1}{|X|}\sum_{k=0}^n P_i(k)Q_k(j)=\delta_{ij}\qforall i,j=0,1,\dots,n,\label{eq:PQnumorth}
\end{gather}
where $\delta_{ij}$ denotes the Kronecker $\delta$-function.
\par
An association scheme is called \emph{$P$-polynomial} with respect to the ordering $R_0,R_1,\dots,R_n$ if there exist polynomials $f_i\in\RR[x]$ of degree $i$ and distinct real numbers $y_0, y_1,\dots,y_n$ such that $P_i(k)=f_i(y_k)$ for all $i,k=0,1,\dots,n$. Similarly an association scheme is called \emph{$Q$-polynomial} with respect to the ordering $E_0,E_1,\dots,E_n$ if there exist polynomials $g_k\in\RR[x]$ of degree $k$ and distinct real numbers $z_0,z_1,\dots,z_n$ such that $Q_k(i)=g_k(z_i)$ for all $i,k=0,1,\dots,n$.
\par
Next we will introduce polar spaces. Let~$V$ be a vector space over a finite field with~$p$ elements  equipped with a nondegenerate form~$f$. A subspace $U$ of $V$ is called \emph{totally isotropic} if \mbox{$f(u,w)=0$} for all $u,w\in U$, or in the case of a quadratic form, if $f(u)=0$ for all $u\in U$. A \emph{finite classical polar space} with respect to a form~$f$ consists of all totally isotropic subspaces of~$V$. It is well known that all maximal (with respect to the dimension) totally isotropic spaces in a polar space have the same dimension, which is called the \emph{rank} of the polar space. The maximal totally isotropic spaces are called \emph{generators}. A finite classical polar space~$\cP$ has the \emph{parameter}~$e$ if every $(n-1)$-space in~$\cP$ lies in exactly $p^{e+1}+1$ generators. Up to isomorphism, there are exactly six finite classical polar spaces of rank~$n$, which are listed together with their parameter~$e$ in Table~\ref{table:polarspaces}. We refer to~\cite{Cam1992},~\cite{Taylor},~\cite[\S~9.4]{BrouwerCohenNeumaier},~\cite[\S~4.2]{Bal2015}, and~\cite[\S~5.1]{HirschfeldThas} for further background on polar spaces. (We emphasize that, in this paper, the term dimension is used in the usual sense as vector space dimension, not as projective dimension sometimes used by geometers.)
\begin{table}[ht]
	\caption{List of all six finite classical polar spaces.}
	\centering
	\renewcommand*{\arraystretch}{1.25}
	\begin{tabular}{cccccccc}
		\toprule[0.4mm]
		\textbf{name} & \textbf{form} & \textbf{type} & \textbf{group} & $\dim(V)$ & $p$ & $e$\\
		\midrule[0.4mm]
		Hermitian & Hermitian & $\hermitianpol{2n-1}$ & $U(2n,q^2)$ & $2n$ & $q^2$ & $-1/2$\\ 
		Hermitian & Hermitian & $\prescript{2}{}{A}_{2n}$ & $U(2n+1,q^2)$ & $2n+1$ & $q^2$ & $1/2$\\
		symplectic & alternating & $C_n$ & $Sp(2n,q)$ & $2n$ & $q$ &  $0$\\
		hyperbolic & quadratic & $D_n$& $O^+(2n,q)$ & $2n$ & $q$ & $-1$\\
		parabolic & quadratic & $B_n$ & $O(2n+1,q)$ & $2n+1$ & $q$ & $0$\\
		elliptic & quadratic & $\prescript{2}{}{D}_{n+1}$ & $O^-(2n+2,q)$ & $2n+2$ & $q$ & $1$\\
		\bottomrule[0.4mm]
	\end{tabular}
	\label{table:polarspaces}
\end{table}
\par
Let $X$ consist of all generators of a polar space of rank~$n$ and define the relations
\begin{equation}\label{eqn:def_Ri}
	R_i=\{(U,W)\in X\times X:\dim(U\cap W)=n-i\}\quad\text{for $i=0,1,\dots,n$}.
\end{equation}
Then $(X,(R_i))$ is an association scheme with $n$ classes (see~\cite[\S~3.6]{BanIto1984},~\cite[\S~9.4]{BrouwerCohenNeumaier}, \cite[\S~6.4]{BannaiIto2021}, for example). It is well known that
\begin{equation}\label{eqn:size_X}
	|X|=\prod_{i=1}^n (1+p^{i+e}).
\end{equation}
Defining the \emph{$q$-binomial coefficient}
\[
\qbin{n}{k}=\prod_{j=1}^k \frac{q^{n-j+1}-1}{q^j-1}
\]
for nonnegative integers $n,k$, the $P$-numbers of $(X,(R_i))$ are given by~\cite[(8.1)]{StantonSome}, \cite[Prop.~2.4]{StantonThree}
\begin{equation}\label{eq:Pnum_dualpolarspaces}
	P_i(k)=v_i\pbin{n}{k}^{-1}\sum_{\ell=0}^i (-1)^\ell \pbin{n-i}{k-\ell}\pbin{i}{\ell} p^{\ell(\ell-i-e-1)},
\end{equation}
where
\begin{equation}\label{eqn:valuencies}
	v_i=p^{\binom{i+1}{2}+ie}\pbin{n}{i}
\end{equation}
are the valencies.\footnote{It should be noted that $p$ is assumed to be odd in~\cite{StantonSome} and~\cite{StantonThree}. However all parameters of the association scheme, as well as~$P_i(k)$ are polynomials in $p$, and hence the expression for $P_i(k)$ holds for all~$p$.} Note that~\eqref{eqn:def_Ri} and~\eqref{eq:Pnum_dualpolarspaces} impose an ordering on $E_0,E_1,\dots,E_n$, which we refer to as the \emph{standard ordering}.
\par
The $P$-number $P_i(k)$, given in~\eqref{eq:Pnum_dualpolarspaces}, is a polynomial of degree $i$ in $p^{-k}$, known as a $q$-Krawtchouk polynomial. The association scheme $(X,(R_i))$ is thus $P$-polynomial with respect to the ordering $R_0,R_1,\dots,R_n$. Moreover it is well known that $(X,(R_i))$ is also $Q$-polynomial with respect to the ordering $E_0,E_1,\dots,E_n$.
\par
We shall need the $P$-numbers also in a different form. We define the \emph{$q$-Pochhammer symbol}
\[
\pochq{a}{0}=1,\quad \pochq{a}{n}=\prod_{i=0}^{n-1}(1-aq^i)
\]
for a positive integer $n$ and a real number $a$ and the \emph{$q$-hypergeometric function} $\mbox{}_{r}\phi_{s}$ by
\[
\hyper{r}{s}{a_1,\dots,a_r}{b_1,\dots,b_s}{q,z}=\sum_{\ell=0}^{\infty} \frac{(a_1;q)_\ell\cdots(a_r;q)_\ell}{(b_1;q)_\ell\cdots(b_s;q)_\ell}\,(-1)^{(1+s-r)\ell} q^{(1+s-r)\binom{\ell}{2}}\,\frac{z^\ell}{(q;q)_\ell}.
\]
The $P$-numbers can now be written as~\cite[(8.1)]{StantonSome}
\begin{align}\label{eq:Pnum_dualpolarspaces_phi}
	P_i(k)=v_i\hyp{p^{-k},p^{-i},-p^{-n-e-1+k}}{0,p^{-n}}{p;p}.
\end{align}
\par
We close this section by noting that~$D_n$ gives rise to another association scheme, called the \emph{bipartite half of $D_n$}, in the following way. Let $X$ be the set of generators of $D_n$ and define two generators in $X$ to be equivalent if the dimension of their intersection has the same parity as $n$. This induces two equivalence classes, $X_1$ and $X_2$, and each pair $(X_i,(R_{2j})_{0\le j\le \lfloor\frac n2\rfloor})$ is a $P$- and $Q$-polynomial association scheme~\cite[\S~9.4.C]{BrouwerCohenNeumaier}, denoted by~$\frac 12 D_n$. Since $e=-1$ for~$D_n$, this also shows that $X_1$ and $X_2$ are $(n-1)$-Steiner systems in $D_n$.


\section{Codes in polar spaces}\label{sec:codesdualpolar}

Let $(X,(R_i))$ be a $P$- and $Q$-polynomial scheme with $n$ classes. We say that a subset~$Y$ of $X$ is a \emph{$d$-code} if no pair $(x,y)\in Y\times Y$ lies in one of the relations $R_1,\dots,R_{d-1}$. In particular, if $X$ is the set of generators in a polar space of rank $n$ and $R_i$ is given in~\eqref{eqn:def_Ri}, then a subset $Y$ of $X$ is a $d$-code if $n-\dim U\cap W\geq d$ for all distinct $U,W$ in~$Y$.
\par
The main goal of this section is to obtain upper bounds on the size of $d$-codes in the latter cases. To do so, we associate with each subset~$Y$ of $X$ two tuples of rational numbers. The \emph{inner distribution} of~$Y$ is the tuple $(A_0,A_1,\dots,A_n)$, where
\[
A_i=\frac{|(Y\times Y)\cap R_i|}{|Y|}.
\]
We then have $A_0=1$ and $\sum_{i=0}^nA_i=|Y|$. Note that $A_1=\cdots=A_{d-1}=0$ if and only if~$Y$ is a $d$-code. The \emph{dual distribution} of $Y$ is the tuple $(A'_0,A'_1,\dots,A'_n)$, where
\begin{align}\label{eq:dualdistr}
	A'_k=\sum_{i=0}^n Q_k(i)A_i.
\end{align}
Since $Q_0(i)=1$, we obtain $A'_0=|Y|$. Moreover we have
\begin{equation}\label{eq:dualDistrnonnegative}
	A'_k\ge 0 \quad\text{for all }k=0,1,\dots,n
\end{equation}
(see~\cite[Thm.~3.3]{DelsartePhD}, for example). 
\par
To derive bounds for $d$-codes in polar spaces, we begin with bounds for $d$-codes in~$\hermitianpol{2n-1}$ and the bipartite half $\frac12 D_n$ of~$D_n$ in Theorem~\ref{thm:bound_herm_hyp}. We proceed in this way because by taking a different $Q$-polynomial ordering for~$\hermitianpol{2n-1}$ and studying $\frac12 D_n$ instead of~$D_n$, we can express the resulting $Q$-numbers by $q$-Hahn polynomials. This allows us to treat~$\hermitianpol{2n-1}$ and $\frac12 D_n$ in a unified way. We will then use the bounds in~$\hermitianpol{2n-1}$ and~$\frac12 D_n$ to establish bounds for codes in the remaining polar spaces. We write
\begin{align}\label{eq:bc_hermpol_halfhyp}
	(b,c)=\begin{cases}
		(-q,-1) &\text{for $\hermitianpol{2n-1}$}\\
		(q^2,1/q) &\text{for $\hyperbolichalf$ if $m$ is even}\\
		(q^2,q) &\text{for $\hyperbolichalf$ if $m$ is odd},
	\end{cases}
\end{align}
and $(x)_i=(x;b)_i$ in what follows.
\par
\begin{theorem}\label{thm:bound_herm_hyp}
	Let $X$ be the set of generators in $\hermitianpol{2n-1}$ or $\hyperbolichalf$, where $n=\lfloor m/2\rfloor$ in the case of~$\hyperbolichalf$, and let $Y$ be a $d$-code in $X$ with $1\leq d\leq n$. Then we have
	\[
	|Y|\le \frac{|X|\pochb{q}{d-1}}{\pochb{qcb^n}{d-1}},
	\]
	where $d$ is required to be odd in the case of~$\hermitianpol{2n-1}$. For even $d$ in~$\hermitianpol{2n-1}$, we have
	\[
	|Y|\leq \frac{|X|\pochb{q}{d-1}}{\pochb{qcb^n}{d-1}}\;\frac{(b^{n-d+2}-1)+q\frac{b^{n+d-2}-1}{qb^{d-2}-1}(b^{n-d+1}-1)}{(b^{n-d+2}-1)+q\frac{b^{n+d-2}-1}{b^{n+d-1}-1}(b^{n-d+1}-1)}.
	\]
	Moreover these bounds also hold for $d$-codes in association schemes with the same $P$- and $Q$-numbers as $\hermitianpol{2n-1}$ or $\hyperbolichalf$.
\end{theorem}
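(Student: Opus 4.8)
The plan is to apply the Delsarte linear programming method to obtain these upper bounds. Recall that the LP method exploits two facts: the inner distribution $(A_0,\dots,A_n)$ of a $d$-code satisfies $A_0=1$, $A_1=\cdots=A_{d-1}=0$, and $A_i\ge 0$ for all $i$, while the dual distribution satisfies $A'_k\ge 0$ by \eqref{eq:dualDistrnonnegative}. The size $|Y|=\sum_i A_i$ is therefore bounded above by the optimum of a linear program, and any single \emph{dual-feasible} functional yields an upper bound. Concretely, the plan is to construct a polynomial $F(x)=\sum_{k=0}^n f_k\,g_k(x)$ in the $Q$-polynomial basis (equivalently, numbers $f_k\ge 0$) such that the associated ``primal'' coefficients
\[
\beta_i=\sum_{k=0}^n f_k\,Q_k(i)
\]
satisfy $\beta_0>0$ and $\beta_i\le 0$ for all $i\ge d$; then $|Y|\le \beta_0/f_0$. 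The unified treatment of $\hermitianpol{2n-1}$ and $\hyperbolichalf$ is possible precisely because, after passing to the alternate $Q$-polynomial ordering and to the bipartite half, the relevant $Q$-numbers are $q$-Hahn polynomials in the single parameter pair $(b,c)$ recorded in \eqref{eq:bc_hermpol_halfhyp}, so one certificate serves all three cases.

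\textbf{Choice of certificate.} First I would exploit the $Q$-polynomial structure: since the $Q_k(i)=g_k(z_i)$ are $q$-Hahn polynomials of degree $k$ in an evaluation variable $z_i$, I would look for $F$ as a low-degree polynomial in $z$ that is nonnegative when expanded in the $g_k$ and that vanishes to high order at the points $z_i$ corresponding to $i=1,\dots,d-1$. The natural odd-$d$ ansatz is a square-type polynomial built from a single $q$-Hahn polynomial (or a short linear combination) of degree $(d-1)/2$, so that $F$ has degree $d-1$, forcing $\beta_i\le 0$ automatically for $i\ge d$ by an interlacing/sign argument, while $\beta_0/f_0$ telescopes into the stated product $\tfrac{|X|\,\pochb{q}{d-1}}{\pochb{qcb^n}{d-1}}$. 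This is exactly the mechanism behind the classical Singleton-type bounds in $q$-analog schemes, and the parity restriction ``$d$ odd'' in the Hermitian case reflects that the clean square ansatz only has the right degree parity then.

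\textbf{The even-$d$ correction.} When $d$ is even the degree-$(d-1)$ polynomial cannot be a perfect square, so the extremal certificate must be taken as a square of a degree-$\tfrac{d}{2}$ polynomial with one free parameter, optimized to push $\beta_0/f_0$ as low as possible subject to all the sign constraints $\beta_i\le 0$. Carrying out this one-parameter optimization (a ratio of affine functions of the free parameter, whose extremum is attained at a boundary where one constraint becomes tight) is what produces the extra rational factor
\[
\frac{(b^{n-d+2}-1)+q\frac{b^{n+d-2}-1}{qb^{d-2}-1}(b^{n-d+1}-1)}{(b^{n-d+2}-1)+q\frac{b^{n+d-2}-1}{b^{n+d-1}-1}(b^{n-d+1}-1)}
\]
in the even-$d$ bound. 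I expect this optimization, together with verifying that the chosen $f_k$ are genuinely nonnegative and that \emph{every} $\beta_i$ for $i\ge d$ is nonpositive, to be the main obstacle: the sign verifications reduce to showing that certain $q$-Hahn evaluations and their telescoped products keep a fixed sign across the three parameter regimes in \eqref{eq:bc_hermpol_halfhyp}, which is delicate because $b$ can be negative (the case $b=-q$) and one must track signs of $\pochb{qcb^n}{d-1}$ carefully. The final sentence of the statement is then immediate, since the entire argument uses only the $P$- and $Q$-numbers and never the concrete geometry, so it transfers verbatim to any association scheme sharing those numbers.
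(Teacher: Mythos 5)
Your general framework is the right one---the paper's proof is indeed a Delsarte linear programming argument using the $q$-Hahn parametrization \eqref{eq:bc_hermpol_halfhyp}, and your final transfer remark is exactly how the last sentence of the theorem is justified---but your concrete certificate is wrong in kind and in degree, and that is where the entire content of the proof lives. The paper's dual-feasible function is not a square: it is the annihilator $i\mapsto\bbin{n-i}{n-d+1}$, which vanishes identically at $i=d,\dots,n$ and whose expansion in the $Q'$-basis is supported on degrees $k\le n-d+1$ (not $d-1$), with coefficients given in closed form by Lemma~\ref{lem:QnumIdentity_Herm_hyp}, proved in the appendix via the $q$-Chu--Vandermonde and $q$-Pfaff--Saalsch\"utz summations. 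Combining that lemma with \eqref{eq:dualdistr} yields \eqref{eq:innerdualidentity_Herm_hyp}; taking $j=n-d+1$ kills every term $i\ge d$ on the right, and after checking that the coefficients $b^{k(d-1)}\bbin{n-k}{d-1}\pochb{qcb^{n-k}}{d-1}/\pochb{q}{d-1}$ are nonnegative, one drops the terms $k\ge 1$ and reads off the first bound. A square ansatz cannot replace this: a square of a degree-$\frac{d-1}{2}$ polynomial has $\beta_i\ge 0$ everywhere, while dual feasibility demands $\beta_i\le 0$ for $i\ge d$, so a pure square is feasible only if it vanishes at all of $i=d,\dots,n$, which forces degree at least $2(n-d+1)$; squares are the mechanism of MRRW-type bounds, not of the Singleton/anticode-type bound asserted here.

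Your explanation of the parity restriction is also demonstrably incorrect: the first bound holds for \emph{all} $1\le d\le n$ in $\hyperbolichalf$, where $b=q^2>0$, so ``the clean square ansatz only has the right degree parity'' cannot be the mechanism. The restriction to odd $d$ in $\hermitianpol{2n-1}$ is purely a sign phenomenon caused by $b=-q$: the two coefficient factors above have signs $(-1)^{(d-1)(n-k+1)}$ and $(-1)^{(d-1)(n-k-d+1)}$, which are simultaneously positive exactly when $d$ is odd. For even $d$ the paper does not optimize a one-parameter family of squares; it takes a fixed linear combination of the two annihilator identities at $j=n-d+1$ and $j=n-d+2$ (the quantities $x_k-y_k$ in the proof), verifies $\sign(x_k)=(-1)^k$, $\sign(y_k)=-1$, and $|x_k|\le|y_k|$ for $k\ge 1$, so that every summand on the left of \eqref{eq:hermpoldeven_1} is nonnegative, and then bounds the left-hand side below by the $k=0$ term; this termwise comparison, not a boundary-of-feasibility optimization, is what produces the rational correction factor. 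Since your proposal defers precisely these expansion identities and sign verifications---which constitute essentially the whole proof---to ``the main obstacle,'' it has a genuine gap rather than being a complete alternative route.
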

\par
To prove Theorem~\ref{thm:bound_herm_hyp}, we first write the $Q$-numbers of $\hermitianpol{2n-1}$ and $\hyperbolichalf$ as $q$-Hahn polynomials. There exist different definitions for these polynomials. We will use the definition given in~\cite[\S~14.6]{Koekoek}. The \emph{$q$-Hahn polynomial} of degree~$k$ in the variable $q^{-x}$ with the parameters $n,A,B,C$ is given by
\[
\hyp{q^{-x},q^{-k},A q^k}{q^{-n},C^{-1}B^{-n}}{q;q}=\sum_{\ell\geq 0}\frac{\pochq{q^{-x}}{\ell}\pochq{q^{-k}}{\ell}\pochq{A q^k}{\ell}}{\pochq{q^{-n}}{\ell}\pochq{C^{-1}B^{-n}}{\ell}\pochq{q}{\ell}}\, q^\ell
\]
for $k=0,1,\dots,n$.
\par
The association scheme~$\hermitianpol{2n-1}$ is $Q$-polynomial with respect to two different orderings: the standard ordering $E_0,E_1,\dots,E_n$ and $E_0,E_n,E_1,E_{n-1},E_2,E_{n-2},\dots$~\cite{ChiharaStanton}. We continue to use $P_i(k)$ and $Q_k(i)$ to denote the $P$- and $Q$-numbers with respect to the standard ordering and we use $P'_i(k)$ and $Q'_k(i)$ to denote the $P$- and $Q$-numbers with respect to the second ordering. Then $P_i(k)$ is given in~\eqref{eq:Pnum_dualpolarspaces} and  $P'_i(k)$ is given by~\cite{ChiharaStanton}
\begin{alignat*}{4}
	P'_i(2k)&=P_i(k)&\quad&\text{for $i=0,\dots,n$ and $k=0,1,\dots,\Big\lfloor\frac{n}{2}\Big\rfloor$},\\
	P'_i(2k+1)&=P_i(n-k)&\quad&\text{for $i=0,\dots,n$ and $k=0,1,\dots,\Big\lfloor\frac{n-1}{2}\Big\rfloor$}.
\end{alignat*}
By applying the quadratic transformation for hypergeometric functions (see~\cite[(1.13.28)]{Koekoek}, for example)
\begin{align}\label{eq:quadTrafo_hypergeometric}
	\hyper{4}{3}{A^2, B^2, C, D}{ABq^{1/2}, -ABq^{1/2}, -CD}{q;q}
	=\hyper{4}{3}{A^2,B^2,C^2,D^2}{A^2B^2q,-CD,-CDq}{q^2;q^2}
\end{align}
to $P_i(k)$ and $P_i(n-k)$ given in~\eqref{eq:Pnum_dualpolarspaces_phi}, we obtain
\[
P'_i(k)=v_i\hyp{(-q)^{-i},(-q)^{-k},(-q)^{-2n+k-1}}{(-q)^{-n},-(-q)^{-n}}{{-q;-q}}.
\]
\par
Next we treat $\hyperbolichalf$. In this case we still denote by $P_i(k)$ and $Q_k(i)$ the $P$- and $Q$-numbers of~$D_m$ and by $P'_i(k)$ and $Q'_k(i)$ the $P$- and $Q$-numbers of~$\hyperbolichalf$. As in Theorem~\ref{thm:bound_herm_hyp}, put $n=\lfloor m/2\rfloor$. From~\cite{ChiharaStanton} we find that
\[
P'_i(k)=P_{2i}(k)\quad\text{for $i,k=0,1,\dots,n$}.
\]
Applying~\eqref{eq:quadTrafo_hypergeometric} to $P_{2i}(k)$ given in~\eqref{eq:Pnum_dualpolarspaces_phi} yields
\[
P'_i(k)=v_{2i}\hyp{q^{-2i},q^{-2k},q^{-2m+2k}}{q^{-m},q^{-m+1}}{q^2;q^2}.
\]
In summary, the $P$- and $Q$-numbers of $\hermitianpol{2n-1}$ and $\hyperbolichalf$ are given by
\begin{align}\label{eq:Pnum_Herm_hyp}
	P'_i(k)=v'_i  \hyp{b^{-i},b^{-k},q^{-1}c^{-1}b^{-2n+k}}{b^{-n},c^{-1}b^{-n}}{b;b}
\end{align}
and
\begin{align}\label{eq:Qnum_Herm_hyp}
	Q'_k(i)=\mu'_k \hyp{b^{-i},b^{-k},q^{-1}c^{-1}b^{-2n+k}}{b^{-n},c^{-1}b^{-n}}{b;b},
\end{align}
where the parameters $b$ and $c$ are stated in~\eqref{eq:bc_hermpol_halfhyp} and the remaining values are given in Table~\ref{table:Pnum_parameters}. Hence in both cases $Q'_k(i)$ is given by the $q$-Hahn polynomial of degree $k$ in $b^{-i}$.
\begin{table}[ht]
	\caption{Valencies and multiplicities occurring in~\eqref{eq:Pnum_Herm_hyp} and~\eqref{eq:Qnum_Herm_hyp}.}
	\centering
	\renewcommand*{\arraystretch}{1.25}
	\begin{tabular}{lccc} 
		\toprule[0.4mm]
		& $\hermitianpol{2n-1}$ & $\hyperbolichalf$ \\
		\midrule[0.4mm]
		$v'_i$ & $q^{i^2}\qqbin{n}{i}$ & $q^{\binom{2i}{2}}\qbin{m}{2i}$ \\
		$\mu'_k$ & \begin{tabular}{@{}c@{}}$\mu_{k/2}$ for $k$ even \\ $\mu_{n-(k-1)/2}$ for $k$ odd\end{tabular} & $\mu_k$ \\
		\bottomrule[0.4mm]
	\end{tabular}
	\label{table:Pnum_parameters}
\end{table}
\par
Before we prove Theorem~\ref{thm:bound_herm_hyp}, we record the following identity whose proof is deferred to the appendix. 
\begin{lemma}\label{lem:QnumIdentity_Herm_hyp}
	Let $X$ be the set of generators in $\hermitianpol{2n-1}$ or $\hyperbolichalf$, where we put $n=\lfloor m/2\rfloor$ in the latter case. Let $Q'_k(i)$ be given in~\eqref{eq:Qnum_Herm_hyp}. Then we have
	\[
	\sum_{k=0}^n b^{k(n-j)}\bbin{n-k}{n-j}\frac{\pochb{qcb^{n-k}}{n-j}}{\pochb{q}{n-j}} Q'_k(i)=|X| 	\bbin{n-i}{j}
	\]
	for all $i,j=0,1,\dots,n$.
\end{lemma}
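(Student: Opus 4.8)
The plan is to treat the asserted equality as a polynomial identity in the single variable $x=b^{-i}$ and to prove it by expanding the $q$-Hahn polynomials and collapsing the resulting double sum with a classical terminating $q$-summation. First observe that the sum over $k$ is genuinely finite: the factor $\bbin{n-k}{n-j}$ vanishes whenever $k>j$, so only $k=0,1,\dots,j$ contribute. Since $Q'_k(i)$ is a polynomial of degree $k$ in $x$ by~\eqref{eq:Qnum_Herm_hyp}, and the right-hand side $|X|\bbin{n-i}{j}=|X|\,\pochb{b^{n-j+1}x}{j}/\pochb{b}{j}$ is a polynomial of degree exactly $j$ in $x$, both sides are polynomials of degree $\le j\le n$. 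As the values $x=b^{-i}$ for $i=0,\dots,n$ are distinct, it is enough to prove the identity as a formal polynomial identity in $x$. Two boundary cases serve as anchors and normalization checks: for $j=n$ only $c_{k,n}=1$ survives and the claim reduces to $\sum_k Q'_k(i)=|X|\delta_{i0}$, i.e.\ to $\sum_k E_k=I$; for $j=0$ only the $k=0$ term survives and the claim reduces to the product formula $|X|=\pochb{qcb^n}{n}/\pochb{q}{n}$, i.e.\ to~\eqref{eqn:size_X} rewritten via~\eqref{eq:bc_hermpol_halfhyp}.

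For the general case I would substitute the terminating ${}_3\phi_2$-series~\eqref{eq:Qnum_Herm_hyp} for $Q'_k(i)$, together with the explicit multiplicity $\mu'_k$ from Table~\ref{table:Pnum_parameters}, and interchange the two finite summations over $k$ and over the series index $\ell$. The dependence on $i$ then factors out completely as $\pochb{b^{-i}}{\ell}$, leaving
\[
\text{LHS}=\sum_{\ell=0}^{j}\frac{\pochb{b^{-i}}{\ell}\,b^{\ell}}{\pochb{b^{-n}}{\ell}\,\pochb{c^{-1}b^{-n}}{\ell}\,\pochb{b}{\ell}}\;U_\ell,
\]
where $U_\ell=\sum_{k=\ell}^{j}\mu'_k\,b^{k(n-j)}\bbin{n-k}{n-j}\frac{\pochb{qcb^{n-k}}{n-j}}{\pochb{q}{n-j}}\pochb{b^{-k}}{\ell}\,\pochb{q^{-1}c^{-1}b^{-2n+k}}{\ell}$. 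The heart of the argument is to evaluate $U_\ell$ in closed form. After reindexing $k=\ell+s$ and clearing the $q$-binomial and $q$-Pochhammer factors, $U_\ell$ becomes a terminating, balanced $q$-hypergeometric sum that one expects to sum by a classical theorem (the $q$-Chu--Vandermonde or $q$-Pfaff--Saalsch\"utz formula). The anticipated outcome is that $U_\ell$ evaluates to an explicit product, after which the $\ell$-sum becomes a single terminating ${}_2\phi_1$ (or ${}_1\phi_0$) in $x=b^{-i}$ that the $q$-binomial theorem resums to $\pochb{b^{n-j+1}b^{-i}}{j}/\pochb{b}{j}=\bbin{n-i}{j}$; the overall constant is pinned to $|X|$ by the $j=0$ check above.

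The step I expect to be the main obstacle is the evaluation of $U_\ell$: one must identify the correct terminating $q$-summation and verify that its balancing (Saalsch\"utz) condition is met for the parameters arising here, while carrying the several $k$-dependent factors and the multiplicity $\mu'_k$ through the calculation. A secondary nuisance is that $\mu'_k$ and the valencies are given by \emph{different-looking} expressions in the two families covered by~\eqref{eq:bc_hermpol_halfhyp}: a parity-split in $k$ with a base-$b^2$ binomial for $\hermitianpol{2n-1}$, and a base-$q$ (half-integer relative to $b=q^2$) binomial for $\hyperbolichalf$. The cleanest route is therefore to first record a single product form for $\mu'_k$ in the parameters $(b,c)$, and, if a fully uniform reduction resists, to run the summation separately in the two regimes.

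As an alternative to the double-sum computation, one could induct on $j$: the right-hand side $\bbin{n-i}{j}$ satisfies a simple recurrence in $j$, and the three-term recurrence of the $q$-Hahn polynomials $Q'_k$ turns the inductive step into a short identity among the coefficients $c_{k,j}$, trading the single hard summation for the verification of a recurrence. Conceptually, the whole statement says that $c_{k,j}$ is the eigenvalue on the idempotent $E_k$ of the Bose--Mesner element $\sum_i\bbin{n-i}{j}D_i$, which is precisely why the $k$-sum truncates at $j$; via~\eqref{eq:AskeyWilsonDuality} and the orthogonality~\eqref{eq:PQnumorth} this also shows that the identity is equivalent to the dual evaluation $\sum_i\bbin{n-i}{j}P'_i(k)=c_{k,j}$, a form that recasts the sum over $i$ (trading $\mu'_k$ for the valency $v'_i$) and may be more convenient to push through.
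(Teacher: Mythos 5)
Your preliminary reductions are all correct: the truncation of the $k$-sum at $k\le j$, the degree count in $x=b^{-i}$, the anchor at $j=n$ (column sums of the idempotents), and the anchor at $j=0$, which indeed reduces to $|X|=\pochb{qcb^n}{n}/\pochb{q}{n}$, a rewriting of~\eqref{eqn:size_X} via~\eqref{eq:bc_hermpol_halfhyp}. Your closing observation is also correct and important: by~\eqref{eq:AskeyWilsonDuality} and~\eqref{eq:PQnumorth} the lemma is equivalent to the dual evaluation $\sum_i\bbin{n-i}{j}P'_i(k)=c_{k,j}$ with $c_{k,j}=b^{k(n-j)}\bbin{n-k}{n-j}\pochb{qcb^{n-k}}{n-j}/\pochb{q}{n-j}$ --- and that is \emph{exactly} the route the paper takes. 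However, as written your argument has a genuine gap at its declared heart: the closed-form evaluation of $U_\ell$ is only ``anticipated.'' No terminating summation theorem is identified, no balancing condition is verified, and --- fatally for the direct route --- no uniform product formula for $\mu'_k$ in the parameters $(b,c)$ is derived, even though the sum $U_\ell$ cannot be attacked without one; Table~\ref{table:Pnum_parameters} gives $\mu'_k$ only via a parity split in terms of the multiplicities of the original scheme. Your expectation for the endgame is also too optimistic: with the product form of $\mu'_k$ and three $k$-dependent Pochhammer factors present, $U_\ell$ is a ${}_4\phi_3$-type sum, and the parallel computation in the paper indicates that Saalsch\"utz-level input is needed, not merely $q$-Chu--Vandermonde followed by the $q$-binomial theorem.

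For comparison, the paper executes the dual form you mention in passing, precisely because it trades the awkward $\mu'_k$ for the valencies $v'_i$, which \emph{do} admit a clean uniform product form: after a case-by-case computation it records $v'_i=(-q)^ic^ib^{\binom{i}{2}+ni}\bbin{n}{i}\pochb{c^{-1}b^{-n}}{i}/\pochb{q}{i}$ in all three regimes of~\eqref{eq:bc_hermpol_halfhyp}. It then proves $\sum_i\bbin{n-i}{j}P'_i(k)=c_{k,j}$ by the interchange-of-summation scheme you describe, but needs two classical summations rather than one: the inner sum (its $S_\ell$) collapses by the $q$-Chu--Vandermonde evaluation of a terminating ${}_2\phi_1$, while the remaining $\ell$-sum is a Saalsch\"utzian ${}_3\phi_2$ handled by the $q$-Pfaff--Saalsch\"utz formula; the lemma then follows by multiplying with $Q'_k(\ell)$, summing over $k$, and invoking~\eqref{eq:PQnumorth}. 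To complete your proposal, switch to the dual form, derive the uniform formula for $v'_i$ first, and be prepared for Pfaff--Saalsch\"utz in the final step.
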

\par
Now we prove Theorem~\ref{thm:bound_herm_hyp}.
\begin{proof}[Proof of Theorem~\ref{thm:bound_herm_hyp}]
	Suppose that $Y$ is a $d$-code in $\hermitianpol{2n-1}$ or $\hyperbolichalf$. Let $(A_0,A_1,\dots,A_n)$ and $(A'_0,A'_1,\dots,A'_n)$ be the inner and dual distribution of~$Y$, respectively, in terms of the orderings imposed by the $P$- and $Q$-numbers given in~\eqref{eq:Pnum_Herm_hyp} and~\eqref{eq:Qnum_Herm_hyp}. From~\eqref{eq:dualdistr} and Lemma~\ref{lem:QnumIdentity_Herm_hyp} we obtain for all $j=0,1,\dots,n$,
	\begin{align}
		\sum_{k=0}^j b^{k(n-j)}\bbin{n-k}{n-j}\frac{\pochb{qcb^{n-k}}{n-j}}{\pochb{q}{n-j}} A'_k
		&=\sum_{i=0}^n A_i\sum_{k=0}^j b^{k(n-j)}\bbin{n-k}{n-j}\frac{\pochb{qcb^{n-k}}{n-j}}{\pochb{q}{n-j}} Q'_k(i)\notag\\
		&=|X|\sum_{i=0}^n A_i\bbin{n-i}{j}.\label{eq:innerdualidentity_Herm_hyp}
	\end{align}
	First assume that $d$ is odd in the case of $\hermitianpol{2n-1}$. Since $A_1=\dots=A_{d-1}=0$ and $\bbin{n-i}{n-d+1}=0$ for $i\geq d$, we find from~\eqref{eq:innerdualidentity_Herm_hyp} with $j=n-d+1$ that
	\[
	\sum_{k=0}^{n-d+1}b^{k(d-1)}\bbin{n-k}{d-1}\frac{\pochb{qcb^{n-k}}{d-1}}{\pochb{q}{d-1}}A'_k=|X|\bbin{n}{d-1}A_0.
	\]
	Since $A_0=1$ and $A'_0=|Y|$, we obtain
	\begin{align}\label{eq:derive_bound_hermpol_halfhyp}
		\sum_{k=1}^{n-d+1} b^{k(d-1)}\bbin{n-k}{d-1}\frac{\pochb{qcb^{n-k}}{d-1}}{\pochb{q}{d-1}}A'_k
		=\bbin{n}{d-1}\left(|X|-\frac{\pochb{qcb^n}{d-1}}{\pochb{q}{d-1}}|Y|\right).
	\end{align}
	Recall that $A'_k\ge 0$. For $\hermitianpol{2n-1}$, the sign of $\pochb{qcb^{n-k}}{d-1}/\pochb{q}{d-1}$ is $(-1)^{(d-1)(n-k+1)}$ and the sign of $\bbin{n-k}{d-1}$ is $(-1)^{(d-1)(n-k-d+1)}$. Since $d$ is odd, both signs are thus positive. Hence all summands on the left-hand side of~\eqref{eq:derive_bound_hermpol_halfhyp} are nonnegative implying
	\[
	|Y|\leq \frac{|X|\pochb{q}{d-1}}{\pochb{qcb^n}{d-1}},
	\]
	as required.
	\par
	Now consider $\hermitianpol{2n-1}$ for even $d$. Put
	\begin{align*}
		x_k&=b^{k(d-1)+d-2}\frac{\pochb{b^{n-k+1}}{d-1}\pochb{b^n}{d-2}}{\pochb{q}{d-1}\pochb{q}{d-2}}\bbin{n-k}{d-1}\bbin{n-1}{d-2},\\
		y_k&=b^{k(d-2)+d-1}\frac{\pochb{b^{n-k+1}}{d-2}\pochb{b^n}{d-1}}{\pochb{q}{d-2}\pochb{q}{d-1}}\bbin{n-k}{d-2}\bbin{n-1}{d-1}.
	\end{align*}
	Use~\eqref{eq:innerdualidentity_Herm_hyp} with $j=n-d+1$ and $j=n-d+2$ to obtain
	\begin{multline}\label{eq:hermpoldeven_1}
		\sum_{k=0}^{n-d+2}(x_k-y_k)A'_k\\
		=|X|b^{d-2}\frac{\pochb{b^n}{d-2}}{\pochb{q}{d-2}}\left(\bbin{n-1}{d-2}\bbin{n}{d-1}+q\bbin{n-1}{d-1}\bbin{n}{d-2}\frac{b^{n+d-2}-1}{qb^{d-2}-1}\right).
	\end{multline}
	Next we show that the summands on the left-hand side are nonnegative. The sign of $\bbin{m}{\ell}$ is $(-1)^{\ell(m-\ell)}$ and the sign of $\pochb{b^m}{\ell}/\pochb{q}{\ell}$ is $(-1)^{m\ell}$. Hence we have $\sign(x_k)=(-1)^k$ and $\sign(y_k)=-1$, which implies that the left-hand side of~\eqref{eq:hermpoldeven_1} equals
	\[
	\sum_{k=0}^{n-d+2}((-1)^k|x_k|+|y_k|)A'_k.
	\]
	From
	\[
	\frac{x_k}{y_k}=b^{k-1} \frac{(b^{n-k-d+2}-1)(b^{n-k+d-1}-1)}{(b^{n+d-2}-1)(b^{n-d+1}-1)},
	\]
	we find that $|x_k|\leq |y_k|$ for all $k\geq 1$. Therefore the left-hand side of~\eqref{eq:hermpoldeven_1} can be bounded from below by $(x_0-y_0)A'_0$, which is also positive. 	Since $A'_0=|Y|$, we thus find from~\eqref{eq:hermpoldeven_1} that
	\[
	|Y|\leq \frac{|X|b^{d-2}\frac{\pochb{b^n}{d-2}}{\pochb{q}{d-2}}\left(\bbin{n-1}{d-2}\bbin{n}{d-1}+q\bbin{n-1}{d-1}\bbin{n}{d-2}\frac{b^{n+d-2}-1}{qb^{d-2}-1}\right)}{\left(b^{d-2}\frac{\pochb{b^{n+1}}{d-1}\pochb{b^n}{d-2}}{\pochb{q}{d-1}\pochb{q}{d-2}}\bbin{n}{d-1}\bbin{n-1}{d-2}-b^{d-1}\frac{\pochb{b^{n+1}}{d-2}\pochb{b^n}{d-1}}{\pochb{q}{d-2}\pochb{q}{d-1}}\bbin{n}{d-2}\bbin{n-1}{d-1}\right)}.
	\]
	We can now deduce the second inequality of the theorem after elementary manipulations. This completes the proof.
\end{proof}
\par
In what follows we use Theorem~\ref{thm:bound_herm_hyp} to obtain bounds for $d$-codes in the remaining polar spaces $\hermitianpol{2n}$, $B_n$, $C_n$, $D_n$, and $\elliptic{n+1}$. To do so, we write
\[
\alpha(n,d)=\bigg(\prod\limits_{i=1}^n (1+q^{2i-1})\bigg)\bigg(\prod\limits_{i=1}^{d-1} \dfrac{1+(-q)^i}{1-(-q)^{n+i}}\bigg)\,\epsilon(n,d),
\]
where $\epsilon(n,d)=1$ for odd $d$ and
\[
\epsilon(n,d)=\frac{((-q)^{n-d+2}-1)+q\frac{(-q)^{n+d-2}-1}{q(-q)^{d-2}-1}((-q)^{n-d+1}-1)}{((-q)^{n-d+2}-1)+q\frac{(-q)^{n+d-2}-1}{(-q)^{n+d-1}-1}((-q)^{n-d+1}-1)}
\]
for even $d$, and
\begin{align*}
	\beta(m,d)=\begin{cases}
		\bigg(\prod\limits_{i=1}^{m-1} (1+q^i)\bigg)\bigg(\prod\limits_{i=1}^{d-1}\dfrac{1-q^{2i-1}}{1-q^{m+2i-2}}\bigg)&\text{for even $m$}\\[10pt]
		\bigg(\prod\limits_{i=1}^{m-1} (1+q^i)\bigg)\bigg(\prod\limits_{i=1}^{d-1}\dfrac{1-q^{2i-1}}{1-q^{m+2i-1}}\bigg)&\text{for odd $m$}.
	\end{cases}
\end{align*}
Observe that, using~\eqref{eqn:size_X}, the bounds in Theorem~\ref{thm:bound_herm_hyp} for $\hermitianpol{2n-1}$ and $\hyperbolichalf$ equal $\alpha(n,d)$ and $\beta(m,d)$, respectively.
\par
We make the following observation about $d$-codes in $D_n$ if $d$ is even.
\begin{proposition}\label{prop:Dn_codes_deven}
	Every $d$-code in $D_n$ with even $d$ and $2\leq d\leq n$ induces a $\frac d2$-code in $\frac 12 D_n$ of the same size.
\end{proposition}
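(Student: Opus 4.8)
The plan is to realize the two halves $X_1,X_2$ of $D_n$ as isomorphic association schemes through an explicit ``partner'' bijection, and to use this bijection to move the part of the $d$-code lying in $X_2$ into $X_1$ without creating short distances. First I would fix a nonisotropic point $N$ of $V$ (which exists for every $q$) and set $H=N^\perp$, a nondegenerate hyperplane. For every generator $W$ of $D_n$ the space $W\cap H$ is totally isotropic, and since $W$ has dimension $n$ while $H$ has rank $n-1$ we get $W\not\subseteq H$ and hence $\dim(W\cap H)=n-1$. As $e=-1$ for $D_n$, such an $(n-1)$-space lies in exactly $q^{0}+1=2$ generators, one in each half. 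I would therefore define $\psi\colon X_2\to X_1$ by sending $W$ to the unique generator of $X_1$ through $W\cap H$; running the same recipe backwards shows that $\psi$ is a bijection.

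The technical heart is a bookkeeping with $H$-sections. For any generators $S,T$ one has $(S\cap T)\cap H=(S\cap H)\cap(T\cap H)$, and intersecting a subspace with the hyperplane $H$ drops its dimension by at most one. Writing $W_0=W\cap H$, I would observe that $\psi(W)\cap H=W_0$ as well (both are $(n-1)$-spaces containing $W_0$), so for $U\in X_1$ the spaces $U\cap W$ and $U\cap\psi(W)$ have the \emph{same} section $U\cap W_0$ with $H$. Consequently both $\dim(U\cap W)$ and $\dim(U\cap\psi(W))$ lie in $\{\dim(U\cap W_0),\,\dim(U\cap W_0)+1\}$. Since $U,W$ lie in different halves while $U,\psi(W)$ lie in the same half, these two dimensions have opposite parities, which forces $\dim(U\cap\psi(W))=\dim(U\cap W)\pm 1$; in particular $\dim(U\cap\psi(W))\le\dim(U\cap W)+1$. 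The identical argument applied to $W,W'\in X_2$ produces equal parities and hence $\dim(\psi(W)\cap\psi(W'))=\dim(W\cap W')$, so $\psi$ is in fact an isomorphism between the two halved schemes.

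With these tools the assembly is routine. Let $Y$ be a $d$-code with $d$ even, and put $Y_1=Y\cap X_1$ and $Y_2=Y\cap X_2$. Any cross pair $U\in Y_1$, $W\in Y_2$ satisfies $n-\dim(U\cap W)\ge d$ with $n-\dim(U\cap W)$ odd (different halves), whence $\dim(U\cap W)\le n-d-1$; the bound above then gives $\dim(U\cap\psi(W))\le n-d$. Combining this with the fact that $\psi$ preserves intersection dimensions on $Y_2$ and that $Y_1$ is already a $\tfrac{d}{2}$-code in $\tfrac12 D_n$, every pair in $Y_1\cup\psi(Y_2)$ has intersection dimension at most $n-d$, so $Y_1\cup\psi(Y_2)$ is a $\tfrac{d}{2}$-code in $\tfrac12 D_n$. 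Finally $Y_1$ and $\psi(Y_2)$ are disjoint: if $\psi(W)\in Y_1$ for some $W\in Y_2$, then $\psi(W)$ and $W$ would be distinct members of $Y$ with $\dim(\psi(W)\cap W)=\dim W_0=n-1>n-d-1$ (using $d\ge 2$), contradicting the $d$-code property. Hence $|Y_1\cup\psi(Y_2)|=|Y_1|+|Y_2|=|Y|$, as required.

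I expect the main obstacle to be the parity-and-section argument of the second paragraph: one must check simultaneously that the $H$-sections of $U\cap W$ and $U\cap\psi(W)$ coincide and that the parity constraint pins the two dimensions to consecutive integers, since it is precisely this step that both bounds the cross distances and upgrades $\psi$ to a scheme isomorphism. Everything else is counting and the direct verification that $\psi$ is well defined and bijective.
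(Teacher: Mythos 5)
Your proof is correct, and it takes a genuinely different---and in fact more careful---route than the paper's. The paper chooses, for each $y\in Y$, an \emph{arbitrary} $(n-1)$-space $y_0\subseteq y$ and replaces $y$ by the generator of $X_1$ through $y_0$, asserting that the parity constraint alone forces the new intersection dimensions below $n-d$. For two code elements lying in the same half this assertion is not justified: one only gets $\dim(\hat x\cap\hat y)\le \dim(x_0\cap y_0)+2\le n-d+2$, and $n-d+2\equiv n\pmod 2$, so parity does not exclude the bad value. It can actually occur: in $D_4$ with $d=4$, writing $e_1,\dots,e_4,f_1,\dots,f_4$ for a hyperbolic basis with $e_i$ paired to $f_i$, the $4$-code $\{x,y\}$ with $x=\langle e_1,\dots,e_4\rangle$, $y=\langle f_1,\dots,f_4\rangle$ (same half, since $\dim(x\cap y)=0\equiv 4$) together with the choices $x_0=\langle e_1,e_2,e_3\rangle$, $y_0=\langle f_2,f_3,f_4\rangle$ produces partners $\hat x=\langle e_1,e_2,e_3,f_4\rangle$ and $\hat y=\langle e_1,f_2,f_3,f_4\rangle$ with $\dim(\hat x\cap\hat y)=2>0=n-d$; moreover for $d=2$ arbitrary choices need not even yield an injective map. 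Your coherent choice $W_0=W\cap H$ for one fixed non-tangent hyperplane $H$ is exactly what repairs this: since $\psi(W)\cap H=W\cap H$, every pair of intersections you compare has the \emph{same} $H$-section, so the two dimensions are confined to two consecutive integers where parity decides---this gives both the cross-pair estimate $\dim(U\cap\psi(W))\le\dim(U\cap W)+1$ and the fact that $\psi$ preserves intersection dimensions within $X_2$---while bijectivity of $\psi$ and your disjointness argument ($\dim(\psi(W)\cap W)=n-1>n-d$) yield $|Y_1\cup\psi(Y_2)|=|Y|$ with no case analysis on where $Y$ sits. The only point to phrase carefully is characteristic $2$: there $N\in N^\perp$ and the polar form degenerates on $H$, but the quadric induced on $H$ is still parabolic of rank $n-1$, which is all your argument uses, so the proof is valid for all $q$.
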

\begin{proof}
	Recall that the set of generators in $D_n$ is partitioned into two equivalence classes $X_1$ and~$X_2$, where two generators lie in the same class if and only if the dimension of their intersection has the same parity as $n$. Let $Y$ be a $d$-code in $D_n$ with even $d$ and $2\leq d\leq n$. For each $y\in Y$, choose an $(n-1)$-space contained in~$y$. Since $d>1$, every two such $(n-1)$-spaces are distinct and the dimension of their intersection is at most $n-d$. Since $e=-1$ for $D_n$, each of these $(n-1)$-spaces lies in exactly two generators---one from~$X_1$ and one from $X_2$. Let $\widehat{Y}$ be the set of all generators in $X_1$ corresponding to the chosen $(n-1)$-spaces. Then we have
	\[
	\dim(x\cap y)\leq n-d
	\]
	for all $x,y\in\widehat{Y}$ since $\dim(x\cap y)$ must have the same parity as $n$. Hence $\widehat{Y}\subseteq X_1$ is a $\frac d2$-code in $\frac 12 D_n$ with $|Y|=|\widehat{Y}|$, as required. 
\end{proof}
We can now derive bounds for codes in all polar spaces.
\begin{corollary}\label{cor:bounds_dualpolarspaces}
	Let $\cP$ be a polar space of rank~$n$ and let $Y$ be a $d$-code in~$\cP$ with $1\leq d\leq n$. Put $\delta=\lceil d/2\rceil$. 
	\begin{enumerate}[(a)]
		\item If $\cP=\hermitianpol{2n-1}$, then $|Y|\leq \alpha(n,d)$.
		\item If $\cP=\hermitianpol{2n}$, then $|Y|\leq \alpha(n+1,d)$.
		\item If $\cP=B_n$ or $C_n$, then $|Y|\leq \beta(n+1,\delta)$.
		\item If $\cP=D_n$ and $d$ is odd, then $|Y|\leq 2\beta(n,\delta)$.
		\item If $\cP=D_n$ and $d$ is even, then $|Y|\leq \beta(n,\delta)$.
		\item If $\cP=\elliptic{n+1}$, then $|Y|\leq \beta(n+2,\delta)$.
	\end{enumerate}
\end{corollary}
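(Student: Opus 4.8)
The plan is to reduce each of the six cases to Theorem~\ref{thm:bound_herm_hyp}, whose two bounds equal $\alpha(n,d)$ for $\hermitianpol{2n-1}$ and $\beta(m,d)$ for $\hyperbolichalf$ (for a $d$-code), supplemented by Proposition~\ref{prop:Dn_codes_deven}. Part~(a) requires no further work: as already observed, the bound of Theorem~\ref{thm:bound_herm_hyp} for $\hermitianpol{2n-1}$ is exactly $\alpha(n,d)$ in both parities of $d$, the even case being absorbed into $\epsilon(n,d)$.

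Next I would treat $D_n$, which sits closest to its half $\tfrac12 D_n$. If $d$ is even (case~(e)), Proposition~\ref{prop:Dn_codes_deven} converts a $d$-code $Y$ into a $\tfrac d2$-code in $\tfrac12 D_n$ of the same size, and since $\tfrac d2=\delta$, Theorem~\ref{thm:bound_herm_hyp} yields $|Y|\le\beta(n,\delta)$. If $d$ is odd (case~(d)), I would instead split $Y=(Y\cap X_1)\cup(Y\cap X_2)$ along the two halves. Two distinct generators lying in the same half meet in a subspace whose codimension in a generator is even; being at least $d$ (an odd number), it is at least $d+1$, so each $Y\cap X_i$ is a $\lceil d/2\rceil$-code in $\tfrac12 D_n$. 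Applying Theorem~\ref{thm:bound_herm_hyp} to both halves then gives $|Y|\le 2\beta(n,\delta)$.

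For the four remaining spaces I would realise $\cP$ geometrically as $W^\perp$, a nondegenerate subspace of codimension $c$ in a larger polar space $\cP'$ of the base type, chosen so that intersecting a generator of $\cP'$ with $V$ produces a generator of $\cP$: for $\hermitianpol{2n}$ take $c=1$ and $\cP'=\hermitianpol{2n+1}$ (the perp of a nonisotropic point); for $\parabolic$ take $c=1$ and $\cP'=D_{n+1}$ (a nonsingular hyperplane section); for $\elliptic{n+1}$ take $c=2$ and $\cP'=D_{n+2}$ (the perp of an anisotropic line, using $O^-\perp O^-=O^+$). Given a $d$-code $Y$ in $\cP$, I would lift each $g\in Y$ to a generator $\hat g$ of $\cP'$ with $\hat g\cap V=g$, choosing $\hat g$ inside a fixed half $X_1$ when $\cP'$ is hyperbolic (such a lift always exists, the extensions of $g$ being evenly distributed over the two halves). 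Distinct $g$ give distinct $\hat g$, so $|\widehat Y|=|Y|$; and since $\hat g\cap\hat g'\cap V=g\cap g'$ has codimension at least $d$ in a generator while $V$ has codimension $c$ in $V'$, one obtains $\dim(\hat g\cap\hat g')\le(n-d)+c$. For $\hermitianpol{2n}$ this says $\widehat Y$ is a $d$-code in $\hermitianpol{2n+1}$, whence $|Y|\le\alpha(n+1,d)$ by Theorem~\ref{thm:bound_herm_hyp}. For $\parabolic$ and $\elliptic{n+1}$ the estimate reads $\dim(\hat g\cap\hat g')\le m-d$ with $m=n+c$, and combining it with the parity constraint (generators in one half of $\tfrac12 D_m$ meet in dimension $\equiv m\bmod 2$) upgrades it to $\dim(\hat g\cap\hat g')\le m-2\lceil d/2\rceil$; hence $\widehat Y$ is a $\delta$-code in $\tfrac12 D_m$, and Theorem~\ref{thm:bound_herm_hyp} gives $\beta(n+1,\delta)$ and $\beta(n+2,\delta)$ respectively. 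The bound for $\symplectic$ then follows immediately, since $\parabolic$ and $\symplectic$ have the same $P$- and $Q$-numbers and the maximal size of a $d$-code depends only on the relations $R_i$.

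I expect the main obstacle to be the geometric bookkeeping in this last step: verifying that the restriction map carries generators of $\cP'$ onto generators of $\cP$ with nonempty fibres in the prescribed half (a residue computation), and executing the dimension count $\dim(\hat g\cap\hat g')\le(n-d)+c$ together with the parity refinement that is precisely what makes $\delta=\lceil d/2\rceil$ appear. By contrast, the even/odd split for $D_n$ and the transfer from $\parabolic$ to $\symplectic$ are routine once the scheme-theoretic nature of $d$-codes is noted.
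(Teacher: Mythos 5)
Your treatment of (a), (d) and (e) coincides with the paper's proof: (a) is Theorem~\ref{thm:bound_herm_hyp} directly, (e) is Proposition~\ref{prop:Dn_codes_deven}, and your split $Y=(Y\cap X_1)\cup(Y\cap X_2)$ with the parity upgrade from $d$ to $d+1$ is exactly what the paper means by ``a $d$-code in $D_n$ induces $\delta$-codes in each of the two bipartite halves.'' Your (b) is also the paper's argument (lift each $n$-space to one of the $q+1$ generators of $\hermitianpol{2n+1}$ through it). In (f) you genuinely deviate: the paper embeds $\elliptic{n+1}$ into $B_{n+1}$ by a hyperplane section and then quotes (c), whereas you embed directly into $D_{n+2}$ with $c=2$, lift into a fixed half, and use the parity of intersections within a half to upgrade $m-\dim(\hat g\cap\hat g')\ge d$ to $\ge 2\delta$. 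That count is correct ($\hat g\cap V=g$ forces injectivity, and lifts into a prescribed half exist since the residue of an $n$-space in $D_{n+2}$ is a $D_2$ with $q+1$ generators in each family), so your route to (f) is sound and arguably more self-contained; the same mechanism also gives (c) for $\parabolic$ via $B_n\hookrightarrow D_{n+1}$.

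The genuine gap is the final transfer from $\parabolic$ to $\symplectic$. You assert the bound carries over ``since $B_n$ and $C_n$ have the same $P$- and $Q$-numbers and the maximal size of a $d$-code depends only on the relations $R_i$,'' but for odd $q$ (and $n\ge 3$) the schemes of $B_n$ and $C_n$ are non-isomorphic: equality of $P$- and $Q$-numbers is equality of \emph{parameters}, and parameters do not determine extremal substructure sizes (non-isomorphic schemes with identical parameters can have different maximal code sizes, just as cospectral strongly regular graphs can have different clique numbers). A bound transfers along equality of $Q$-numbers only if it is proved from the $Q$-numbers alone, i.e.\ is a linear programming bound --- which is precisely why Theorem~\ref{thm:bound_herm_hyp} carries the final clause about schemes with the same $P$- and $Q$-numbers, its proof using only $A'_k\ge 0$. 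Your bound for $B_n$, by contrast, is geometric (it uses the embedding $B_n\hookrightarrow D_{n+1}$, extra structure that $C_n$ does not share for odd $q$; there is no analogous hyperplane-section realisation of the symplectic space, and $B_n\cong C_n$ only for even $q$). The paper avoids this by working inside $C_n$ itself: fusing the relations into $R_0,\,R_1\cup R_2,\,R_3\cup R_4,\dots$ yields an association scheme with the same $P$- and $Q$-numbers as $\tfrac12 D_{n+1}$ by~\cite{Ivanovetal}, a $d$-code in $C_n$ is a $\delta$-code in this fused scheme, and then the last sentence of Theorem~\ref{thm:bound_herm_hyp} applies verbatim. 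Replacing your transfer step by this fusion argument (which also covers $B_n$ in one stroke) closes the gap.
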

\begin{proof}
	The bound in (a) follows directly from Theorem~\ref{thm:bound_herm_hyp} by using~\eqref{eqn:size_X}.
	\par
	A $d$-code in $D_n$ induces $\delta$-codes in each of the two bipartite halves of $D_n$, so it is at most twice as large as a $\delta$-code in $\frac 12 D_n$. Theorem~\ref{thm:bound_herm_hyp} then gives~(d) and Proposition~\ref{prop:Dn_codes_deven} implies (e).
	\par
	In the cases of $B_n$ and $C_n$, one obtains a new association scheme with the classes
	\[
	R_0, R_1\cup R_2, R_3\cup R_4, \dots.
	\]
	This new association scheme has the same $P$- and $Q$-numbers as $\frac 12 D_{n+1}$~\cite{Ivanovetal}. Therefore the size of a $d$-code in $B_n$ or $C_n$ is at most the upper bound for a $\delta$-code in $\frac 12 D_{n+1}$ given in Theorem~\ref{thm:bound_herm_hyp}, which yields (c).
	\par
	To establish the remaining cases~(b) and~(f), note that $\elliptic{n+1}$ and $\hermitianpol{2n}$ arise by intersecting $B_{n+1}$ and $\hermitianpol{2n+1}$, respectively, with a hyperplane. Hence $\elliptic{n+1}$ can be embedded into $B_{n+1}$ and $\hermitianpol{2n}$ can be embedded into $\hermitianpol{2n+1}$. Note that $B_{n+1}$ and $\hermitianpol{2n+1}$ are of rank $n+1$ and each generator in~$\elliptic{n+1}$ or~$\hermitianpol{2n}$ becomes an~$n$-space in~$B_{n+1}$ or~$\hermitianpol{2n+1}$ under these embeddings. In $B_{n+1}$ and $\hermitianpol{2n+1}$, every $n$-space is contained in exactly~$p^{e+1}+1=q+1$ generators. For each embedded element of $Y$, we choose one of these $q+1$ generators giving a subset~$\widetilde{Y}$ of~$B_{n+1}$ or~$\hermitianpol{2n+1}$. Then~$\widetilde{Y}$ is also a $d$-code and (c) implies (f) and (a) implies~(b).
\end{proof}
\par
We also have the following more useful bounds on $\alpha(n,d)$ and $\beta(n,d)$.
\begin{lemma}\label{lem:bounds_alpha_beta}
	For $1\le d\le n$, we have
	\begin{align}\label{eq:bound_alpha}
		\alpha(n,d)<\begin{cases}
			\frac{14}{5}q^{n(n-d+1)} & \text{for odd $d$}\\
			\frac{14}{5}q^{n(n-d+2)} & \text{for even $d$},
		\end{cases}
	\end{align}
	and
	\begin{align}\label{eq:bound_beta}
		\beta(n,d)<\begin{cases}
			\frac52q^{(n-1)(n-2d+2)/2} & \text{for even $n$}\\
			\frac52q^{n(n-2d+1)/2} & \text{for odd $n$}.
		\end{cases}
	\end{align}
\end{lemma}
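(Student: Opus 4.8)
The plan is to isolate the dominant power of $q$ in each of $\alpha(n,d)$ and $\beta(n,d)$ and then to bound the remaining factor by the stated constant. Writing each binomial-type factor as $1+q^{j}=q^{j}(1+q^{-j})$ and each Pochhammer factor as $1-(-q)^{j}=-(-q)^{j}(1-(-q)^{-j})$, I would pull out the leading monomials. A direct bookkeeping of exponents shows that the leading power of $q$ is $q^{n(n-d+1)}$ in $\alpha(n,d)$ for odd $d$ and $q^{n(n-d+2)-1}$ for even $d$ (the extra $q^{n-1}$ coming from $\epsilon(n,d)$), while in $\beta(n,d)$ it is exactly the exponent claimed in~\eqref{eq:bound_beta}. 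Since $\alpha(n,d)$ and $\beta(n,d)$ are positive (each bounds the size of the nonempty $d$-code consisting of a single generator), all residual sign factors must multiply to $+1$, so I may replace every factor by its modulus; each modulus then has the shape $1\pm q^{-e}$ with $e\ge 1$, and $1-q^{-e}\in(0,1)$ because $q\ge 2$. The task thus reduces to bounding a ratio of products of such factors by $\tfrac{14}{5}$ or $\tfrac{5}{2}$.

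For $\beta$ and for $\alpha$ with odd $d$ I would use standard tail estimates together with Euler's identity $\prod_{k\ge 1}(1+q^{-k})=\prod_{k\ge 1}(1-q^{-(2k-1)})^{-1}$. In $\beta(n,d)$ the residual factor is $\prod_{i=1}^{n-1}(1+q^{-i})$ times $\prod_{i=1}^{d-1}(1-q^{-(2i-1)})$ divided by a product of denominator factors $(1-q^{-e})$ with $e\ge n$; bounding the first product by $\prod_{k\ge 1}(1+q^{-k})$ and telescoping it against the odd-exponent numerator via Euler's identity collapses everything to a single tail product $\prod_{e}(1-q^{-e})^{-1}$ whose exponents are distinct and at least $2$ (at least $3$ when $n$ is odd), using $n\ge d$. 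The same manipulation handles odd-$d$ $\alpha$: the even- and odd-indexed numerator factors recombine into $\prod_{j\ge 1}(1+q^{-j})$, and the denominator tail has exponents $\ge n+1\ge 4$ once $d\ge 3$ (the case $d=1$ being immediate). Each resulting tail product is monotone in $q$, so it suffices to check the inequality at $q=2$, where I would bound finite partial products explicitly and control the remainder by $\prod_{e\ge M}(1-q^{-e})^{-1}\le\exp\bigl(\sum_{e\ge M}q^{-e}/(1-q^{-e})\bigr)$ and the analogous estimate for $\prod(1+q^{-e})$. This gives constants below $\tfrac52$ and below $\tfrac{14}{5}$; the binding case is $q=2$, $d=3$, $n=3$ for $\alpha$, where the ratio is about $2.71$.

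The genuine obstacle is $\alpha(n,d)$ for even $d$, because of the factor $\epsilon(n,d)$. Here I would first set $t=-q$ and simplify $\epsilon$ in closed form. A short computation collapses the denominator of $\epsilon$ to $(t-1)(t^{2n}-1)/(t^{n+d-1}-1)$ and its numerator to $\bigl(t^{2n}+t^{n+1}-t^{n+d-1}-t^{d-1}+t-1\bigr)/(t^{d-1}+1)$; taking moduli and factoring out leading monomials yields $|\epsilon(n,d)|=q^{n-1}\rho$ with an explicit correction $\rho=1+O(q^{-1})$, where every exponent that appears is at least $1$ and the dominant correction $q^{-(n-d+1)}$ is at most $q^{-1}$ since $d\le n$ (for $d=2$ the numerator even simplifies to $q^{2n}-1$). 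One checks $\rho\le\tfrac32$, the extreme being $n=d=2$. Combining this with the contribution $q^{n(n-d+1)}C$ of the two main products—where $C$ is exactly the residual factor bounded in the odd-$d$ analysis, here satisfying $C\le \prod_{j\ge1}(1+q^{-j})/\prod_{j\ge3}(1-q^{-j})<3.1$ at $q=2$—gives $\alpha(n,d)\le q^{n(n-d+2)-1}\,C\rho$. Crucially the even-$d$ target carries an extra factor $q$, so the inequality becomes $C\rho<\tfrac{14}{5}q$; since $q\ge 2$ this only requires $C\rho<\tfrac{28}{5}$, and $C\rho\le 3.1\cdot\tfrac32<4.7$ comfortably suffices.

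In summary, the deliverables are: the exponent bookkeeping identifying the leading power of $q$; the reduction, via positivity, to products of factors $1\pm q^{-e}$; the Euler-identity telescoping with explicit $q=2$ tail estimates for $\beta$ and odd-$d$ $\alpha$; and the closed-form simplification and modulus estimate of $\epsilon(n,d)$ for even-$d$ $\alpha$. I expect the algebraic simplification of $\epsilon$ and the clean extraction of its size $q^{n-1}$ to be the main difficulty, the remaining steps being routine once the $q=2$ worst case and the tail bounds are in place; a few small cases ($d=1$, and the minimal $n=d$) should be verified directly to confirm the stated constants are not exceeded.
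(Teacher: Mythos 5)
Your proposal is correct in substance but follows a genuinely different route from the paper. The paper's proof is deliberately low-tech: it never takes $q=2$ worst cases or infinite-product numerics beyond three clean constants $\tfrac52$, $\tfrac75$, $2$ (obtained from $1+x<e^x$), and its only structural tool is the termwise inequality $\tfrac{x-1}{y-1}\le\tfrac{x}{y}$ for $y\ge x>1$. For $\beta$ it bounds each ratio $\tfrac{1-q^{2i-1}}{1-q^{m+2i-2}}$ by $q^{1-m}$ directly; for odd-$d$ $\alpha$ it pairs consecutive factors as $\tfrac{(q^{2i}+1)(q^{2i-1}-1)}{(q^{n+2i}\mp1)(q^{n+2i-1}\pm1)}\le q^{-2n}(1+q^{-2i})$, yielding the constant $2\cdot\tfrac75=\tfrac{14}{5}$; and for even $d$ it sidesteps any closed-form analysis of $\epsilon(n,d)$ by showing $(-1)^{(n+1)(d-1)}\epsilon(n,d)<\tfrac{q^{n+d-1}+(-1)^n}{q^{d-1}-1}$, which cancels the leftover factor $\tfrac{q^{d-1}-1}{q^{n+d-1}+(-1)^n}$ exactly, explaining why the paper's even-$d$ exponent is $n(n-d+2)$ rather than the sharper $n(n-d+2)-1$ you extract. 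By contrast, you use Euler's identity $\prod_{k\ge1}(1+q^{-k})=\prod_{k\ge1}(1-q^{-(2k-1)})^{-1}$ with telescoping and explicit $q=2$ tail estimates, plus positivity of $\alpha,\beta$ (via the singleton code, legitimate since Theorem~\ref{thm:bound_herm_hyp} precedes the lemma) to reduce to moduli. Your algebraic collapse of $\epsilon(n,d)$ at $t=-q$ is correct — I verified that the denominator telescopes to $(t-1)(t^{2n}-1)/(t^{n+d-1}-1)$ and the numerator bracket to $t^{2n}+t^{n+1}-t^{n+d-1}-t^{d-1}+t-1$, giving $|\epsilon|\sim q^{n-1}$ — so your route buys a sharper leading order for even $d$, at the cost of more bookkeeping, numerical tail estimates, and two asserted-but-unproved claims ($\rho\le\tfrac32$ with extreme at $n=d=2$, and the distinctness/size conditions on the denominator exponents, both of which do check out since $d\le n$). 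One small correction: at $(q,n,d)=(2,3,3)$ the actual ratio $\alpha(n,d)/q^{n(n-d+1)}$ is $9/8$, not $2.71$; the latter is your method's \emph{upper estimate} $\prod_{j\ge1}(1+2^{-j})\prod_{e\ge4}(1-2^{-e})^{-1}\approx 2.709$, which is indeed just below $\tfrac{14}{5}$, so the conclusion stands but the phrasing conflates the bound with the quantity being bounded.
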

To prove Lemma~\ref{lem:bounds_alpha_beta} we use the identity
\begin{align}\label{eq:ineqfraction}
	\frac{x-1}{y-1}\le \frac{x}{y}\quad\text{for $y\ge x>1$}
\end{align}
and the following lemma.
\begin{lemma}
	Let $n\geq 1$ and $q\geq 2$ be integers. Then we have
	\begin{align}\label{eq:ineq_product}
		\prod_{i=1}^n \left(1+\frac{1}{q^i}\right)<\frac52,\quad \prod_{i=1}^n\left(1+\frac{1}{q^{2i}}\right)<\frac 75 \qandq \prod_{i=1}^n\left(1+\frac{1}{q^{2i-1}}\right)<2.
	\end{align}
\end{lemma}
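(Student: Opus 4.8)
The plan is to bound each finite product by the exponential of a geometric series via the elementary inequality $1+x\le e^x$, and then to exploit that every factor is decreasing in $q$, so that the extremal case is $q=2$. Since each factor exceeds $1$, the products increase with $n$, and applying $1+x\le e^x$ to the finite product already yields a bound by the full infinite sum. For the middle product this is immediately sharp enough:
\[
\prod_{i=1}^n\Big(1+\frac1{q^{2i}}\Big)\le\exp\Big(\sum_{i=1}^\infty q^{-2i}\Big)=\exp\Big(\frac1{q^2-1}\Big)\le e^{1/3},
\]
using $q^2-1\ge 3$ for $q\ge 2$, and $e^{1/3}<7/5$ holds because $e<(7/5)^3=343/125$. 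This settles the second inequality of~\eqref{eq:ineq_product} for all $q\ge 2$.

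The same device disposes of the third product. Here I would compute
\[
\prod_{i=1}^n\Big(1+\frac1{q^{2i-1}}\Big)\le\exp\Big(\sum_{i=1}^\infty q^{-(2i-1)}\Big)=\exp\Big(\frac{q}{q^2-1}\Big)\le e^{2/3},
\]
since $q/(q^2-1)$ is decreasing in $q$ and takes the value $2/3$ at $q=2$; and $e^{2/3}<2$ holds because $2/3<\ln 2$.

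The hard part will be the first product, because the naive estimate $\prod_{i=1}^n(1+q^{-i})\le\exp\big(\sum_{i\ge1}q^{-i}\big)=e^{1/(q-1)}$ gives $e\approx 2.718$ at $q=2$, which just exceeds $5/2$; this single tight case is the only genuine obstacle. To repair it I would peel off the leading factor and apply $1+x\le e^x$ only to the tail:
\[
\prod_{i=1}^n\Big(1+\frac1{q^i}\Big)=\Big(1+\frac1q\Big)\prod_{i=2}^n\Big(1+\frac1{q^i}\Big)\le\Big(1+\frac1q\Big)\exp\Big(\frac1{q(q-1)}\Big).
\]
For $q\ge 2$ we have $1+1/q\le 3/2$ and $1/(q(q-1))\le 1/2$, so the right-hand side is at most $\tfrac32 e^{1/2}$, and $\tfrac32 e^{1/2}<5/2$ because this amounts to $e<25/9$. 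This completes all three estimates in~\eqref{eq:ineq_product}, the tail-peeling refinement being needed precisely where the crude bound fails.
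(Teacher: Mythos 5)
Your proof is correct and follows essentially the same route as the paper: all three products are bounded via $1+x< e^x$ and the geometric series, with the first factor $1+\tfrac1q$ peeled off in the delicate first product, reducing everything to the worst case $q=2$. Your final estimate $\tfrac32 e^{1/2}<\tfrac52$ (i.e.\ $e<25/9$) is exactly the numerical fact the paper invokes in the form $(1+x)e^x<\tfrac52$ for $x\in[0,\tfrac12]$.
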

\begin{proof}
	We use $1+x<\exp(x)$ to obtain
	\[
	\prod_{i=1}^n \left(1+\frac{1}{q^i}\right)<\left(1+\frac 1q\right)\exp\left(\frac{1}{q(q-1)}\right)\leq \left(1+\frac 1q\right)\exp\left(\frac 1q\right).
	\]
	Applying $(1+x)\exp(x)<\frac 52$ for all $x\in[0,\frac12]$ yields the first inequality. Using a similar approach gives us
	\[
	\prod_{i=1}^n\left(1+\frac{1}{q^{2i}}\right)<\exp\left(\frac{1}{q^2-1}\right)\leq \exp\left(\frac 13\right)<\frac 75,
	\]
	and
	\[
	\prod_{i=1}^n\left(1+\frac{1}{q^{2i-1}}\right)<\exp\left(\frac{q}{q^2-1}\right)\leq \exp\left(\frac 23\right)<2,
	\]
	as required.
\end{proof}
We can now prove Lemma~\ref{lem:bounds_alpha_beta}.
\begin{proof}[Proof of Lemma~\ref{lem:bounds_alpha_beta}]
	For $\beta(n,d)$ and even $n$, use~\eqref{eq:ineqfraction} and~\eqref{eq:ineq_product} to obtain 
	\begin{align*}
		\beta(n,d)&<\left(\prod_{i=1}^{n-1} q^i\left(1+\frac{1}{q^i}\right)\right) q^{(-n+1)(d-1)}\\
		&\le \frac 52 q^{(n-1)(n-2d+2)/2}.
	\end{align*}
	The bound for $\beta(n,d)$ and odd $n$ can be obtained similarly. For $\alpha(n,d)$, we write
	\begin{align}\label{eq:alpha_1}
		\alpha(n,d)=\bigg(\prod_{i=1}^n (1+q^{2i-1})\bigg)\bigg(\prod_{i=1}^{d-1} \frac{q^i+(-1)^i}{q^{n+i}-(-1)^{n+i}}\bigg)(-1)^{(n+1)(d-1)}\epsilon(n,d).
	\end{align}
	We have
	\begin{equation}\label{eqn:prod_herm}
		\prod_{i=1}^{d-1} \frac{q^i+(-1)^i}{q^{n+i}-(-1)^{n+i}}
		=\begin{cases}
			\prod\limits_{i=1}^{\frac{d-1}{2}} \frac{(q^{2i}+1)(q^{2i-1}-1)}{(q^{n+2i}-(-1)^n)(q^{n+2i-1}+(-1)^n)}&\text{for odd $d$}\\
			\frac{q^{d-1}-1}{q^{n+d-1}+(-1)^n}\prod\limits_{i=1}^{\frac{d-2}{2}} \frac{(q^{2i}+1)(q^{2i-1}-1)}{(q^{n+2i}-(-1)^n)(q^{n+2i-1}+(-1)^n)}&\text{for even $d$}.
		\end{cases}
	\end{equation}
	Using~\eqref{eq:ineqfraction} and~\eqref{eq:ineq_product}, we obtain for each $r\ge 1$,
	\begin{align*}
		\prod_{i=1}^r \frac{(q^{2i}+1)(q^{2i-1}-1)}{(q^{n+2i}-(-1)^n)(q^{n+2i-1}+(-1)^n)}
		&\leq \prod_{i=1}^r \frac{(q^{2i}+1)(q^{2i-1}-1)}{(q^{n+2i}+1)(q^{n+2i-1}-1)}\\
		&\leq \prod_{i=1}^r  q^{-2n}\left(1+\frac{1}{q^{2i}}\right)\\
		&< \frac 75 q^{-2nr}.
	\end{align*}
	Substitute into~\eqref{eqn:prod_herm} to give
	\begin{align}\label{eq:herm_prod2}
		\prod_{i=1}^{d-1} \frac{q^i+(-1)^i}{q^{n+i}-(-1)^{n+i}}
		<\begin{cases}
			\frac 75 q^{-n(d-1)}&\text{for odd $d$}\\
			\frac 75 q^{-n(d-2)}\frac{q^{d-1}-1}{q^{n+d-1}+(-1)^n}&\text{for even $d$}.
		\end{cases}
	\end{align}
	From~\eqref{eq:ineq_product} we have
	\begin{align}\label{eq:herm_prod1}
		\prod_{i=1}^n (1+q^{2i-1})=\prod_{i=1}^n q^{2i-1}\left(1+\frac{1}{q^{2i-1}}\right)<2q^{n^2}.
	\end{align}
	Substitute~\eqref{eq:herm_prod2} and~\eqref{eq:herm_prod1} into~\eqref{eq:alpha_1} to obtain
	\begin{align}\label{eq:Rbound_herm}
		\alpha(n,d)<\begin{cases}
			\frac{14}{5}q^{n(n-d+1)}&\text{for odd $d$}\\
			\frac{14}{5}q^{n(n-d+2)}\frac{q^{d-1}-1}{q^{n+d-1}+(-1)^n}(-1)^{(n+1)(d-1)}\epsilon(n,d)&\text{for even $d$}.
		\end{cases}
	\end{align}
	For even $d$, we have
	\begin{align}
		(-1)^{(n+1)(d-1)}\epsilon(n,d)&=\frac{q\frac{q^{n+d-2}-(-1)^n}{q^{d-1}-1}(q^{n-d+1}+(-1)^n)-(-1)^n(q^{n-d+2}-(-1)^n)}{(q^{n-d+2}-(-1)^n)+q\frac{q^{n+d-2}-(-1)^n}{q^{n+d-1}+(-1)^n}(q^{n-d+1}+(-1)^n)}   \nonumber\\[1ex]
		&=\frac{q\frac{(q^{n+d-2}-(-1)^n)(q^{n-d+1}+(-1)^n)}{(q^{d-1}-1)(q^{n-d+2}-(-1)^n)}-(-1)^n}{q\frac{(q^{n+d-2}-(-1)^n)(q^{n-d+1}+(-1)^n)}{(q^{n+d-1}+(-1)^n)(q^{n-d+2}-(-1)^n)}+1}   \nonumber\\[1ex]
		&<\frac{q^{n+d-1}+(-1)^n}{q^{d-1}-1},\label{eq:eps_bound}
	\end{align}
	using~\eqref{eq:ineqfraction}, so that~\eqref{eq:Rbound_herm} gives the required bound for $\alpha(n,d)$.
\end{proof}
\par
We close this section by discussing the sharpness of the bounds in Corollary~\ref{cor:bounds_dualpolarspaces}. For a vector space $V$, let $P_n(V)$ be the set of $n$-spaces of $V$. Define a mapping
\begin{gather*}
	v:\FF_p^{n\times n}\to P_n(\FF_p^{2n})\\
	A\mapsto\left\{\binom{x}{Ax}:x\in\FF_p^n\right\}.
\end{gather*}
It is well known~\cite[\S~9.5.E]{BrouwerCohenNeumaier} that, after an appropriate choice of the form, $v(A)$ is in $\hermitianpol{2n-1}$ if and only if $A$ is Hermitian, $v(A)$ is in $C_n$ if and only $A$ is symmetric and $v(A)$ is in $D_n$ if and only if~$A$ is alternating, namely skew-symmetric with zero main diagonal (as before, $p=q^2$ for $\hermitianpol{2n-1}$ and $p=q$ otherwise). The mapping $v$ satisfies
\[
n-\dim(v(A)\cap v(B))=\rk(A-B)
\]
for all $A,B\in\FF_q^{n\times n}$, so in particular $v$ is injective. Accordingly define a subset $Z$ of $\FF_q^{n\times n}$ to be a \emph{$d$-code} if $\rk(A-B)\ge d$ for all distinct $A,B\in Z$. Such objects were studied in~\cite{Sch2018},~\cite{Sch2020}, and~\cite{DelGoe1975}, for Hermitian, symmetric, and alternating matrices, respectively.
\par
In particular from~\cite{Sch2018} and the injection $v$ we find that, for odd $d$, there exists a $d$-code~$Y$ in~$\hermitianpol{2n-1}$ satisfying $|Y|=q^{n(n-d+1)}$. In view of Lemma~\ref{lem:bounds_alpha_beta}, this shows that the bound in Corollary~\ref{cor:bounds_dualpolarspaces}~(a) for odd $d$ is sharp up to a constant factor. Likewise from~\cite{Sch2020} we find that, for odd $d$, there exists a $d$-code~$Y$ in~$C_n$ satisfying
\[
|Y|=\begin{cases}
	q^{(n+1)(n-d+1)/2} & \text{for even $n$}\\
	q^{n(n-d+2)/2} & \text{for odd $n$},
\end{cases}
\]
showing that the bound in Corollary~\ref{cor:bounds_dualpolarspaces}~(c) for $\cP=C_n$ and odd $d$ is sharp up to a constant factor. Since $B_n$ and $C_n$ are isomorphic for even $q$ (see~\cite[\S~6.4]{BannaiIto2021}, for example), the same is true when $\cP=B_n$ and $q$ is even. From~\cite{DelGoe1975} we find that, for even $d$, there exists a $d$-code~$Y$ in~$D_n$ satisfying
\[
|Y|=\begin{cases}
	q^{(n-1)(n-d+2)/2} & \text{for even $n$ and even $q$}\\
	q^{n(n-d+1)/2} & \text{for odd $n$}.
\end{cases}
\]
Since a $d$-code is trivially also a $(d-1)$-code, this shows that the bound in Corollary~\ref{cor:bounds_dualpolarspaces}~(d) and~(e) is sharp up to a constant factor except possibly when $n$ is even and $q$ is odd. In all other cases one can obtain constructions of $d$-codes in a similar fashion, showing that the remaining bounds in Corollary~\ref{cor:bounds_dualpolarspaces} are met up to a small power of $q^n$.

\section{Nonexistence of Steiner systems in polar spaces}\label{sec:Steinerdualpolar}

We now prove Theorem~\ref{thm:Steinersystems_dualpolarspaces}. The proof is split into the following cases:
\begin{enumerate}[(C1)]
	\item $t=n-1$ and $\cP=\hermitianpol{2n}, \elliptic{n+1}$ for $q=2$ or $\cP=\hermitianpol{2n-1},B_n,C_n$,
	\item $\cP=D_n$ with $1<t<n-1$,
	\item $\cP=B_n$ or $C_n$ with $t=2$ and even $n$ or $2<t<n-1$,
	\item $\cP=\elliptic{n+1}$ with $t\in\{2,3\}$ and odd $n$ or $3<t<n-1$, but $(n,t)\not\in\{(7,4),(8,5)\}$,
	\item $\cP=\hermitianpol{2n-1}$ with $1<t<n-1$,
	\item $\cP=\hermitianpol{2n}$ with $t=2$ and even $n$, or $2<t<n-1$ except for  $(n,t)=(6,3)$,
	\item $t=2$ and $\cP=B_n$ or $C_n$ for odd $n>3$ or $\cP=\elliptic{n+1}$ for even $n>3$,
	\item $\cP=\elliptic{n+1}$ with $t=3$ and even $n>4$,
	\item $\cP=\elliptic{n+1}$ with $(n,t)=(7,4)$ or $(8,5)$, or $\cP=\hermitianpol{2n}$ with $(n,t)=(6,3)$.
\end{enumerate}
\par
The case (C1) is essentially known~\cite[p.~160]{VanhovePhD} and a proof is sketched below for completeness. The cases (C2)--(C6) will follow from Theorem~\ref{thm:bound_herm_hyp} and Corollary~\ref{cor:bounds_dualpolarspaces}. The cases (C7)--(C9) are some corner cases, which need special treatment.
\par
We begin with a sketch for a proof of (C1).
\begin{proof}[Proof of (C1)]
	By taking the elements of an $(n-1)$-Steiner system in a polar space of rank $n$ that contain a fixed isotropic $1$-space $v$ and taking the quotient by $v$, one obtains an $(n-2)$-Steiner system in a polar space of the same type but rank $n-1$. This reduces the existence question to $2$-Steiner systems in rank $3$ or $1$-Steiner systems, namely spreads, in rank $2$. There are no spreads in $B_2$ for odd~$q$, $\hermitianpol{4}$ for $q=2$, and $\hermitianpol{5}$ for all~$q$~\cite[\S~7.4]{HirschfeldThas} and there are no $2$-Steiner systems in $\elliptic{4}$ for $q=2$~\cite{Panigrahi} and $C_3$ for all~$q$~\cite{Thomas}, \cite{Cooperstein_etal}. Since $B_n$ and $C_n$ are isomorphic if $q$ is even (see~\cite[\S~6.4]{BannaiIto2021}, for example), there are also no $2$-Steiner systems in $B_3$ for even~$q$.
\end{proof}
\par
To prove (C2)--(C6), we recall that the number of totally isotropic $t$-spaces in a polar space of rank~$n$ is
\[
\pbin{n}{t}\prod_{i=0}^{t-1}(1+p^{n-i+e})
\]
(see~\cite[Lem.~9.4.1]{BrouwerCohenNeumaier}, for example). Since every generator contains exactly $\pbin{n}{t}$ subspaces of dimension~$t$, the size of a $t$-Steiner system is thus given by
\begin{equation}\label{eqn:size_Steiner_t}
	\prod_{i=0}^{t-1}(1+p^{n-i+e}).
\end{equation}
If $Y$ is a $t$-Steiner system, then the intersection of two distinct members of $Y$ can have dimension at most $t-1$, and so a $t$-Steiner system is an $(n-t+1)$-code. Henceforth we write $d=n-t+1$ and let $B$ denote the corresponding bound of a $d$-code in Corollary~\ref{cor:bounds_dualpolarspaces}. We denote the size of an $(n-d+1)$-Steiner system by~$S$, hence
\begin{equation}\label{eqn:size_Steiner_d}
	S=\prod_{i=0}^{n-d}(1+p^{n-i+e}),
\end{equation}
and in particular
\begin{equation}\label{eqn:bound_Steiner_d}
	S\ge p^{\frac12(n-d+1)(n+d+2e)}.
\end{equation}
We set $R=B/S$ and show that $R<1$.
\par
\begin{proof}[Proof of (C2)]
	In this case we assume that $\cP=D_n$ and $2<d<n$. Use Corollary~\ref{cor:bounds_dualpolarspaces}~(d) and (e), \eqref{eqn:bound_Steiner_d}, and~\eqref{eq:bound_beta} to obtain
	\begin{align}\label{eq:hyp_R}
		R<\begin{cases}
			\frac 52 q^{\frac 12 (d-2)(d-n)}&\text{for even $n$ and even $d$}\\
			5 q^{\frac 12(d-1)(d-n-1)}&\text{for even $n$ and odd $d$}\\
			\frac 52 q^{\frac 12 (d-2) (d-n-1)}&\text{for odd $n$ and even $d$}\\
			5 q^{\frac 12 (d-1)(d-n-2)}&\text{for odd $n$ and odd $d$.}
		\end{cases}
	\end{align}
	If $n$ and $d$ have the same parity, then~\eqref{eq:hyp_R} implies $R<1$. If $n$ and $d$ have a different parity, then~\eqref{eq:hyp_R} implies $R<1$, except when $(n,d)=(4,3)$. In the latter case, Corollary~\ref{cor:bounds_dualpolarspaces}~(d) and~\eqref{eqn:size_Steiner_d} give 
	\[
	R=\frac{2}{1+q^2}<1.
	\]
	This completes the proof.
\end{proof}
\begin{proof}[Proof of (C3)]
	In this case we assume that $\cP=B_n$ or $C_n$ and $2<d<n-1$ or $d=n-1$ is odd. Use Corollary~\ref{cor:bounds_dualpolarspaces}~(c),~\eqref{eqn:bound_Steiner_d}, and~\eqref{eq:bound_beta} to obtain
	\[
	R<\begin{cases}
		\frac 52 q^{\frac 12 \left(d(d-1)-(n+1)(d-2)\right)}&\text{for even $n$ and even $d$}\\
		\frac 52 q^{\frac 12 \left(d(d+1)-(n+1)(d-1)\right)}&\text{for even $n$ and odd $d$}\\
		\frac 52 q^{\frac 12 \left(d(d-1)-n(d-2)\right)}&\text{for odd $n$ and even $d$}\\
		\frac 52 q^{\frac 12 \left(d(d+1)-n(d-1)\right)}&\text{for odd $n$ and odd $d$.}
	\end{cases}
	\]
	It is the readily verified that $R<1$, except if (i) $d=4$ and $n=6,7$, or (ii) $d=3$ and $n=6,7$, or (iii) $d=n-2$ is odd,  or (iv) $d=n-1$ is odd. For (i) and (ii), Corollary~\ref{cor:bounds_dualpolarspaces}~(c) and~\eqref{eqn:size_Steiner_d} imply that $R$ equals $(1+q^3)/(1+q^4)$ and $1/(1+q^4)$, respectively, giving $R<1$ in both cases. For (iii), Corollary~\ref{cor:bounds_dualpolarspaces}~(c) and~\eqref{eqn:size_Steiner_d} imply that
	\begin{align*}
		R&=\bigg(\prod_{i=1}^{n-2} (1+q^i)\bigg)\bigg(\prod_{i=1}^{\frac n2-1}\frac{1-q^{2i-1}}{(1-q^{\frac n2+i})(1+q^{\frac n2+i})}\bigg)\\
		&=\frac{1}{1+q^{n-1}}\bigg(\prod_{i=1}^{\frac n2}(1+q^i)\bigg)\bigg(\prod_{i=1}^{\frac n2-1}\frac{1-q^{2i-1}}{1-q^{\frac n2+i}}\bigg)\\[1ex]
		&<\frac 52\, \frac{q}{1+q^{n-1}}<1,
	\end{align*}
	using~\eqref{eq:ineqfraction},~\eqref{eq:ineq_product}, and $n\ge 4$. Similarly, for (iv), we deduce
	\[
	R<\frac 52\, \frac{q}{1+q^{n-2}}<1,
	\]
	which completes the proof.
\end{proof}
\begin{proof}[Proof of (C4)]
	In this case we assume that $\cP=\elliptic{n+1}$ and $2<d<n-2$ or $d=n-2$ is odd or $d=n-1$ is even, but $(n,d)\not\in\{(7,4),(8,4)\}$. Use Corollary~\ref{cor:bounds_dualpolarspaces}~(e),~\eqref{eqn:bound_Steiner_d}, and~\eqref{eq:bound_beta} to obtain
	\[
	R<\begin{cases}
		\frac 52 q^{\frac 12 \left(d(d+1)-(n+1)(d-2)\right)}&\text{for even $n$ and even $d$}\\
		\frac 52 q^{\frac 12 \left(d(d+1)-(n+1)(d-1)\right)}&\text{for even $n$ and odd $d$}\\
		\frac 52 q^{\frac 12 \left(d(d+1)-(n+2)(d-2)\right)}&\text{for odd $n$ and even $d$}\\
		\frac 52 q^{\frac 12 \left(d(d+1)-(n+2)(d-1)\right)}&\text{for odd $n$ and odd $d$.}
	\end{cases}
	\]
	Then $R<1$, except for (i) $d=3$ and $n=5,6$, or (ii) $d=4$ and $n=9,10$, or (iii) $d=6$ and $n=9,10$. Corollary~\ref{cor:bounds_dualpolarspaces}~(e) and~\eqref{eqn:size_Steiner_d} imply that, in the respective cases, $R$ equals
	\[
	\frac{1+q^3}{1+q^4},\qquad \frac{(1+q^3)(1-q^8)}{1-q^{12}},\qquad \frac{(1-q^8)(1+q^5)}{1-q^{14}}.
	\]
	In all cases we have $R<1$, as required.
\end{proof}
\begin{proof}[Proof of (C5)]
	In this case we assume that $\cP=\hermitianpol{2n-1}$ with $2<d<n$. Use Corollary~\ref{cor:bounds_dualpolarspaces}~(a), \eqref{eqn:bound_Steiner_d}, and~\eqref{eq:bound_alpha} to obtain
	\[
	R<\begin{cases}
		\frac{14}{5} q^{(d-1)(d-n-1)}&\text{for odd $d$}\\
		\frac{14}{5} q^{(d-1)(d-n-1)+n}&\text{for even $d$}.
	\end{cases}
	\]
	Then $R<1$, except for $(n,d)=(5,4)$. In the latter case we find from Corollary~\ref{cor:bounds_dualpolarspaces}~(a), \eqref{eq:alpha_1}, \eqref{eqn:size_Steiner_d}, and~\eqref{eq:eps_bound} that
	\begin{align*}
		R&<\frac{q^8-1}{q^3-1} \prod_{i=1}^3 (q^{2i-1}+1)\frac{q^i+(-1)^i}{q^{5+i}+(-1)^i} \\
		&= \frac{(q^4-1)(q^5+1)}{(q^7+1)(q^3-1)}\leq 2q^{-1}\leq 1,
	\end{align*}
	as required.
\end{proof}
\begin{proof}[Proof of (C6)]
	In this case we assume that $\cP=\hermitianpol{2n}$ with $2<d<n-1$ or $d=n-1$ is odd, where the case $(n,d)=(6,4)$ is excluded. Use Corollary~\ref{cor:bounds_dualpolarspaces}~(b), \eqref{eqn:bound_Steiner_d}, and~\eqref{eq:bound_alpha} to obtain
	\begin{align}\label{eq:herm2_R}
		R<\begin{cases}
			\frac{14}{5} q^{(d-1)(d-n-2)+2d-1}&\text{for odd $d$}\\
			\frac{14}{5}q^{d(d-n-1)+2n+2}&\text{for even $d$}.
		\end{cases}
	\end{align}
	For odd $d$, it follows $R<1$, except when $(n,d)=(4,3)$. In the latter case we find from Corollary~\ref{cor:bounds_dualpolarspaces}~(b) and~\eqref{eqn:size_Steiner_d} that
	\[
	R=\frac{(q^4-1)(q^5+1)}{(q^3-1)(q^7+1)}<\frac{q^2+q^{-3}}{q^3-1}<1.
	\]
	\par
	If $d$ is even, then~\eqref{eq:herm2_R} implies $R<1$, except when $(n,d)=(8,6)$ (recall that we excluded $(n,d)=(6,4)$). In this case we find from Corollary~\ref{cor:bounds_dualpolarspaces}~(b),~\eqref{eq:alpha_1}, and~\eqref{eqn:size_Steiner_d} that
	\[
	R=\bigg(\prod_{i=1}^6 (1+q^{2i-1})\bigg)\bigg(\prod_{i=1}^4 \frac{q^i+(-1)^i}{q^{9+i}+(-1)^i}\bigg) \frac{q^5-1}{q^{14}-1}\,\epsilon(9,6)
	\]
	with
	\begin{align*}
		\epsilon(9,6)
		&=\frac{q^5+1+q\frac{q^{13}+1}{q^5-1}(q^4-1)}{q^5+1+q\frac{q^{13}+1}{q^{14}-1}(q^4-1)}\\
		&=\frac{q^{14}-1}{q^5-1}\;\frac{q^{18}-q^{14}+q^{10}+q^5-q-1}{q^{19}+q^{18}-q-1}\\
		&<\frac{q^{14}-1}{q^5-1}\;\frac 1q.
	\end{align*}
	This gives
	\begin{align*}
		R&<\frac 1q\bigg(\prod_{i=1}^6 (1+q^{2i-1})\bigg)\bigg(\prod_{i=1}^4 \frac{q^i+(-1)^i}{q^{9+i}+(-1)^i}\bigg)\\
		&=\frac 1q\,\frac{(q^8-1)(q^7+1)(q^9+1)}{(q^5-1)(q^6+1)(q^{13}-1)}\\
		&<q^{-3}\,\frac{q^7+1}{q^5-1}<1,
	\end{align*}
	using~\eqref{eq:ineqfraction}, which completes the proof.
\end{proof}
\par
Now it remains to prove the corner cases (C7)--(C9). In these cases we show that the dual distribution of the Steiner system has a negative entry, which contradicts~\eqref{eq:dualDistrnonnegative}. In what follows, all inner and dual distributions (in particular those in $\hermitianpol{2n-1}$) are determined with respect to the standard orderings imposed by~\eqref{eqn:def_Ri} and~\eqref{eq:Pnum_dualpolarspaces}. We require the following result on the inner and dual distributions of $t$-Steiner systems. 
\begin{proposition}\label{prop:tSteiner_tdesign_innerdistr}
	Let $X$ be the set of generators in a polar space of rank $n$, let $t$ be an integer satisfying $1\le t\le n$, and suppose that $Y$ is a $t$-Steiner system in $X$. Let $(A_i)$ and $(A'_k)$ be the inner distribution and dual distribution of $Y$, respectively. Then we have
	\begin{align*}
		A_{n-i}=\sum_{j=i}^{t-1} (-1)^{j-i} p^{\binom{j-i}{2}}\pbin{j}{i}\pbin{n}{j} \left(\prod_{\ell=j}^{t-1}(1+p^{n-\ell+e})-1\right)
	\end{align*}
	for all $i=0,1,\dots,n-1$ and $A'_1=A'_2=\cdots=A'_t=0$.	
\end{proposition}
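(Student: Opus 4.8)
The engine for both parts is a counting lemma: \emph{every totally isotropic $j$-space with $0\le j\le t$ lies in a constant number $\lambda_j=\prod_{\ell=j}^{t-1}(1+p^{n-\ell+e})$ of members of $Y$.} First I would prove this by double counting flags $s\subseteq r\subseteq g$ with $\dim s=j$, $\dim r=t$, $r$ totally isotropic and $g\in Y$. Summing over $r$ first and using the $t$-Steiner property (each totally isotropic $t$-space lies in exactly one $g\in Y$) gives the number of totally isotropic $t$-spaces through $s$; summing over $g$ first gives $\lambda_j(s)\pbin{n-j}{t-j}$. Since the residue $s^\perp/s$ is again a polar space of rank $n-j$ with the same parameter $e$, the former count is $\pbin{n-j}{t-j}\prod_{i=0}^{t-j-1}(1+p^{n-j-i+e})$, and dividing yields the constant $\lambda_j$, independent of $s$. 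Thus the Steiner property at level $t$ upgrades to a combinatorial design property at every level $j\le t$; note that $\lambda_0=|Y|$ recovers the size formula~\eqref{eqn:size_Steiner_t}.

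For the inner distribution I would double count $T_j=\sum_{(U,W)\in Y\times Y}\pbin{\dim(U\cap W)}{j}$, the number of (automatically totally isotropic) $j$-spaces lying in $U\cap W$, over ordered pairs. Grouping by the relation containing $(U,W)$ gives $T_j=|Y|\sum_{i=0}^n\pbin{i}{j}A_{n-i}$, while grouping by the common $j$-space $s$ gives $T_j=\sum_s\lambda_j(s)^2=\lambda_j^2\,|\Omega_j|$, where $|\Omega_j|$ is the number of totally isotropic $j$-spaces. Inserting $|\Omega_j|=\pbin nj\prod_{i=0}^{j-1}(1+p^{n-i+e})$ and $|Y|=\lambda_0$, the right-hand side collapses to $\pbin nj\lambda_j$, so $\sum_{i=0}^n\pbin ij A_{n-i}=\pbin nj\lambda_j$ for $0\le j\le t$. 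Since a $t$-Steiner system is an $(n-t+1)$-code we have $A_{n-i}=0$ for $t\le i\le n-1$ and $A_0=1$; isolating the $i=n$ term and applying the $q$-binomial (Möbius) inversion $f(i)=\sum_{j\ge i}(-1)^{j-i}p^{\binom{j-i}{2}}\pbin ji\,g(j)$ to $g(j)=\pbin nj(\lambda_j-1)$ produces exactly the stated formula for $A_{n-i}$, with an empty sum giving $A_{n-i}=0$ when $i\ge t$.

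For the dual distribution I would argue in the Bose--Mesner algebra. Let $\chi_Y$ be the characteristic vector of $Y$ and $N_t$ the incidence matrix of totally isotropic $t$-spaces versus generators. The Steiner property reads $N_t\chi_Y=\mathbf 1$, while $N_t\mathbf 1_X=\gamma\mathbf 1$ with $\gamma=\prod_{i=1}^{n-t}(1+p^{i+e})$ the constant number of generators through a totally isotropic $t$-space; a flag count gives $\gamma|Y|=|X|$. Hence $w:=\chi_Y-\tfrac{|Y|}{|X|}\mathbf 1_X$ satisfies $N_tw=0$. The structural fact that, in the standard ordering, the row space of $N_t$ equals $V_0\oplus V_1\oplus\dots\oplus V_t$ (with $V_k$ the column space of $E_k$) then forces $w\perp V_k$, i.e.\ $E_kw=0$, for $0\le k\le t$. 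Since $E_k\mathbf 1_X=0$ for $k\ge1$, we get $E_k\chi_Y=0$, and therefore $A'_k=\tfrac{|X|}{|Y|}\lVert E_k\chi_Y\rVert^2=0$ for $1\le k\le t$, which together with~\eqref{eq:dualDistrnonnegative} is the asserted conclusion. Equivalently, the level-$j$ design relations $N_j\chi_Y=\lambda_j\mathbf 1$ for all $j\le t$ say precisely that $Y$ is a $t$-design in the regular semilattice of the polar space, whose equivalence with $A'_1=\dots=A'_t=0$ is Delsarte's theorem.

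The routine parts are the two double counts and the $q$-binomial inversion; the main obstacle is justifying the structural input in part (b), namely that $N_t$ annihilates exactly the eigenspaces $V_k$ with $k>t$ and none with $1\le k\le t$. Concretely this amounts to showing that the eigenvalue of $N_t^{\mathsf T}N_t=\sum_i\pbin{n-i}{t}D_i$ on $V_k$, equal to $\sum_{i=0}^n\pbin{n-i}{t}P_i(k)$, vanishes for $k>t$ and is nonzero for $1\le k\le t$; this is exactly where the $Q$-polynomial geometry of the dual polar scheme is essential, and it can be extracted from the explicit $q$-Krawtchouk form of the $P$-numbers in~\eqref{eq:Pnum_dualpolarspaces} or cited from the theory of designs in $Q$-polynomial schemes. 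I would also take care to apply the inversion only after isolating the $i=n$ term, since $A_0=1$ prevents the support of $(A_{n-i})_i$ from being confined to $i\le t-1$.
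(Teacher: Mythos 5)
Your proposal is correct, but it reaches the two conclusions by a genuinely different route than the paper. The paper derives everything from the single $P$-number identity of Lemma~\ref{lem:identityQnum_polar} (proved in the appendix via $q$-Chu--Vandermonde): combining it with~\eqref{eq:dualdistr} gives~\eqref{eq:identity_dualinner_Steiner}, whose $j=t$ instance, together with the code property, the size of $Y$, and the nonnegativity~\eqref{eq:dualDistrnonnegative}, forces $A'_1=\cdots=A'_t=0$ by positivity of the coefficients; the instances $j<t$ then yield $\sum_{i=0}^{t-1}A_{n-i}\pbin{i}{j}=\pbin{n}{j}\bigl(\lambda_j-1\bigr)$ and the inversion~\eqref{eq:qbininversion} finishes. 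You instead split the work: for the inner distribution, your flag double count in the residue $s^\perp/s$ (giving the constancy $\lambda_j=\prod_{\ell=j}^{t-1}(1+p^{n-\ell+e})$) and the quadratic count of $T_j=\sum_{(U,W)}\pbin{\dim(U\cap W)}{j}$ produce the same relation $\sum_i\pbin{i}{j}A_{n-i}=\pbin{n}{j}\lambda_j$ purely geometrically, with no $P$- or $Q$-numbers at all --- this half of your argument is more elementary and self-contained than the paper's. For the dual distribution, your spectral argument via $N_t\chi_Y=\mathbf 1$ is Delsarte's design-theoretic characterization, and it even yields $A'_1=\cdots=A'_t=0$ as an exact identity, so your closing appeal to~\eqref{eq:dualDistrnonnegative} is superfluous (the paper, by contrast, genuinely uses nonnegativity at this step). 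The one input you correctly flag as nontrivial --- that the eigenvalue $\sum_{i=0}^n\pbin{n-i}{t}P_i(k)$ of $N_t^{\mathsf T}N_t$ vanishes precisely for $k>t$ --- is exactly the paper's identity~\eqref{eq:identity_Pnum_polar}, which evaluates it as $p^{k(n-t)}\pbin{n-k}{n-t}\prod_{\ell=1}^{n-t}(1+p^{\ell-k+e})$: this is zero for $k>t$ and strictly positive for $0\le k\le t$, so your structural claim about the row space of $N_t$ holds with the standard ordering. In other words, a fully self-contained version of your proof would still have to reproduce (or cite, e.g.\ from Delsarte's or Stanton's theory of designs in these schemes) essentially the computation the paper relegates to the appendix; what your approach buys is a cleaner conceptual separation --- the inner distribution becomes pure counting, and the vanishing of $A'_1,\dots,A'_t$ is exhibited as the design property itself rather than extracted by a positivity argument.
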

\par
To prove Proposition~\ref{prop:tSteiner_tdesign_innerdistr} we use the following counterpart of Lemma~\ref{lem:QnumIdentity_Herm_hyp} for the $Q$-numbers of the association scheme of polar spaces, for which we give a proof in the appendix.
\begin{lemma}\label{lem:identityQnum_polar}
	Let $X$ be the set of generators in a polar space of rank $n$ and let $Q_k(i)$ be the corresponding $Q$-numbers given by~\eqref{eq:Pnum_dualpolarspaces} and~\eqref{eq:AskeyWilsonDuality}. Then we have
	\[
	\sum_{k=0}^n p^{k(n-j)}\pbin{n-k}{n-j}\prod_{\ell=1}^{n-j} (1+p^{\ell-k+e}) Q_k(i)=|X|\pbin{n-i}{j}
	\]
	for all $i,j=0,1,\dots,n$.
\end{lemma}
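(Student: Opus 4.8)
The plan is to reduce the stated $Q$-number identity to an equivalent statement about the $P$-numbers and then evaluate the latter by a $q$-hypergeometric summation. Write $c_{jk}=p^{k(n-j)}\pbin{n-k}{n-j}\prod_{\ell=1}^{n-j}(1+p^{\ell-k+e})$ for the kernel on the left-hand side, so that the claim reads $\sum_{k=0}^n c_{jk}Q_k(i)=|X|\pbin{n-i}{j}$. I would start from the change-of-basis relation $D_i=\sum_k P_i(k)E_k$ and form the matrix $M_j=\sum_{i=0}^n\pbin{n-i}{j}D_i$ in the Bose-Mesner algebra; expanding gives $M_j=\sum_k\lambda_k E_k$ with $\lambda_k=\sum_{i=0}^n\pbin{n-i}{j}P_i(k)$. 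Re-expanding via $E_k=\frac{1}{|X|}\sum_i Q_k(i)D_i$ and comparing coefficients of the linearly independent $D_i$ shows that Lemma~\ref{lem:identityQnum_polar} is equivalent to the single identity
\[
\sum_{i=0}^n\pbin{n-i}{j}P_i(k)=c_{jk}\qquad\text{for all }j,k.
\]
Combinatorially, $M_j=W_jW_j^{T}$, where $W_j$ is the containment matrix of totally isotropic $j$-spaces versus generators (its $(U,W)$-entry counts the $j$-spaces in $U\cap W$), so the $\lambda_k$ are precisely the eigenvalues of $W_jW_j^{T}$, and the task is to show these equal $c_{jk}$.

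Next I would evaluate this $P$-number sum directly. Substituting the explicit $q$-Krawtchouk expression \eqref{eq:Pnum_dualpolarspaces} together with the valencies \eqref{eqn:valuencies}, interchanging the two summations, and collecting the powers of $p$, the inner sum over $i$ becomes a terminating $q$-hypergeometric series that can be summed in closed form by a $q$-Chu--Vandermonde or $q$-Saalsch\"utz evaluation. After this collapse the remaining single sum simplifies, and the factor $\pbin{n-k}{n-j}$ together with the product $\prod_{\ell=1}^{n-j}(1+p^{\ell-k+e})$ should emerge after elementary manipulation. As a sanity check one verifies the boundary cases $j=0$ and $j=n$, which pin down the normalization: for $j=0$ one uses $\sum_i\pbin{n-i}{0}P_i(k)=\sum_iP_i(k)=|X|\delta_{k0}$ against $c_{0k}=\delta_{k0}\prod_{\ell=1}^n(1+p^{\ell+e})=|X|\delta_{k0}$ by \eqref{eqn:size_X}, and for $j=n$ one uses $\pbin{n-i}{n}=\delta_{i0}$, so that both sides equal $1$.

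An alternative I would keep in reserve is induction on $j$: the $q$-Pascal recurrence for $\pbin{n-i}{j}$ combined with the three-term recurrence of the $P$-polynomial family $P_i(\cdot)$ turns the claim into a recurrence for $c_{jk}$ that matches the obvious multiplicative recurrence of the product form. The main obstacle in either route is purely computational, namely forcing the $q$-hypergeometric sum to collapse to the compact product $\prod_{\ell=1}^{n-j}(1+p^{\ell-k+e})$ while correctly tracking the parameter $e$ (so that the argument stays uniform across all six polar spaces) and the various powers of $p$ and signs. Since the companion identity in Lemma~\ref{lem:QnumIdentity_Herm_hyp} has the same shape, I expect the same summation theorem to drive both appendix proofs.
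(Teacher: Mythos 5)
Your proposal follows essentially the same route as the paper's proof: the paper likewise reduces the lemma to the dual identity $\sum_{i=0}^n \pbin{n-i}{j}P_i(k)=p^{k(n-j)}\pbin{n-k}{n-j}\prod_{\ell=1}^{n-j}(1+p^{\ell-k+e})$ (your Bose--Mesner coefficient comparison is equivalent to the paper's use of the orthogonality relation~\eqref{eq:PQnumorth}) and then evaluates that sum by substituting~\eqref{eq:Pnum_dualpolarspaces}, interchanging summations, and collapsing the inner sum with an inverted $q$-Chu--Vandermonde identity, exactly as you predict. The only step you leave as ``elementary manipulation'' is made explicit in the paper by the $q$-binomial theorem, which is what turns the remaining single sum into the product $\prod_{\ell=1}^{n-j}(1+p^{\ell-k+e})$.
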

\par
We now prove Proposition~\ref{prop:tSteiner_tdesign_innerdistr}.
\begin{proof}[Proof of Proposition~\ref{prop:tSteiner_tdesign_innerdistr}]
	From~\eqref{eq:dualdistr} and Lemma~\ref{lem:identityQnum_polar} we find that, for all $j\ge 0$,
	\begin{align}\label{eq:identity_dualinner_Steiner}
		\sum_{k=0}^j A'_k\,p^{k(n-j)}\pbin{n-k}{n-j} \prod_{\ell=1}^{n-j}(1+p^{\ell-k+e})=|X|\sum_{i=0}^n A_i \pbin{n-i}{j}.
	\end{align}
	Since $Y$ is an $(n-t+1)$-code, we have $A_0=1$ and $A_1=\cdots=A_{n-t}=0$ and therefore obtain, by setting $j=t$ in~\eqref{eq:identity_dualinner_Steiner},
	\[
	\sum_{k=0}^{t}A'_k\, p^{k(n-t)}\pbin{n-k}{n-t} \prod_{\ell=1}^{n-t}(1+p^{\ell-k+e})=|X|\pbin{n}{t}.
	\]
	From $A'_0=|Y|$ we find that
	\[
	\sum_{k=1}^{t}A'_k\, p^{k(n-t)}\pbin{n-k}{n-t} \prod_{i=1}^{n-t}(1+p^{\ell-k+e})=\pbin{n}{t}\left(|X|-|Y|\prod_{\ell=1}^{n-t}(1+p^{\ell+e})\right).
	\]
	From the expression~\eqref{eqn:size_X} for $|X|$ and the expression~\eqref{eqn:size_Steiner_t} for $|Y|$, we see that the right-hand side is zero. Since $A'_k\ge 0$ by~\eqref{eq:dualDistrnonnegative}, we conclude $A'_1=A'_2=\cdots=A'_t=0$. Moreover~\eqref{eq:identity_dualinner_Steiner} simplifies to
	\[
	\pbin{n}{j} \prod_{\ell=1}^{n-j}(1+p^{\ell+e})|Y|=|X|\left(\pbin{n}{j}+\sum_{i=n-t+1}^n A_i \pbin{n-i}{j}\right)
	\]
	for $j=0,1,\dots,t-1$. Using~\eqref{eqn:size_X} and the expression~\eqref{eqn:size_Steiner_t} for $|Y|$ again, we obtain
	\begin{align*}
		\sum_{i=0}^{t-1} A_{n-i} \pbin{i}{j}
		&=\pbin{n}{j}\left(\prod_{\ell=j}^{t-1}(1+p^{n-\ell+e})-1\right).
	\end{align*}
	By the inversion formula
	\begin{align}\label{eq:qbininversion}
		\sum_{j=i}^k  (-1)^{j-i}q^{\binom{j-i}{2}}\qbin{j}{i}\qbin{k}{j}=\delta_{ik}
	\end{align}
	for nonnegative integers $i,k$ (which can be deduced from the $q$-binomial theorem, for example), we obtain the desired expression for $A_{n-i}$.
\end{proof}
\par
We now prove (C7)--(C9). Henceforth we denote by $(A_i)$ and $(A'_k)$ the inner and dual distribution, respectively, of a putative $t$-Steiner system $Y$.
\begin{proof}[Proof of (C7)]
	We now assume that $t=2$ and $\cP=B_n$ or $C_n$ for odd $n>3$ or $\cP=\elliptic{n+1}$ for even $n>3$. We will show that $A'_{n-1}<0$ in the first case and $A'_n<0$ in the second case. By~\eqref{eq:dualdistr} and~\eqref{eq:AskeyWilsonDuality} we have
	\[
	\frac{A'_k}{\mu_k}=1+\frac{P_{n-1}(k)}{v_{n-1}}A_{n-1}+\frac{P_n(k)}{v_n}A_n.
	\]
	By Proposition~\ref{prop:tSteiner_tdesign_innerdistr} we have
	\begin{align*}
		A_{n-1}=q^{n-1+e}\qbin{n}{1}\qandq
		A_n=(q^{n+e}+1)(q^{n-1+e}+1)-\qbin{n}{1}q^{n-1+e}-1.
	\end{align*}
	From~\eqref{eq:Pnum_dualpolarspaces} and~\eqref{eqn:valuencies} we find for $B_n$ and $C_n$ that
	\[
	\frac{P_{n-1}(n-1)}{v_{n-1}}=\qbin{n}{1}^{-1}\left(q^{-n+1}-q^{-2n+4}\qbin{n-1}{1}\right)\quad\text{and}\quad \frac{P_n(n-1)}{v_n}=q^{-2n+2},
	\]
	and for $\elliptic{n+1}$ that
	\begin{align*}
		\frac{P_{n-1}(n)}{v_{n-1}}=-q^{-2n+2}\qandq
		\frac{P_n(n)}{v_n}=q^{-2n}.
	\end{align*}
	Here we have crucially used the assumed parity of $n$. For $B_n$ and $C_n$, we then obtain
	\[
	\frac{A'_{n-1}}{\mu_{n-1}}
	=2-\frac{q^n-1}{(q-1)q^{n-1}}-\frac{1}{q^{2n-2}}-\frac{q^{n-1}-1}{(q-1)q^{n-3}}+\frac{(q^n+1)(q^{n-1}+1)}{q^{2n-2}}.
	\]
	For $n>3$, we have
	\begin{align*}
		2-\frac{q^n-1}{(q-1)q^{n-1}}-\frac{1}{q^{2n-2}}
		&=\frac{q^{2n-1}-2q^{2n-2}+q^{n-1}-q+1}{(q-1)q^{2n-2}}\\
		&<\frac{q^{2n-1}-2q^{n+1}+q^{n-1}-q+1}{(q-1)q^{2n-2}}\\
		&=\frac{q^{n-1}-1}{(q-1)q^{n-3}}-\frac{(q^n+1)(q^{n-1}+1)}{q^{2n-2}}
	\end{align*}
	and therefore $A'_{n-1}<0$ if $\cP=B_n$ or $C_n$, which completes the proof in the first case. For $\elliptic{n+1}$, we obtain
	\[
	\frac{A'_n}{\mu_n}=1-\frac{q^n-1}{(q-1)q^n}-\frac{1}{q^{2n}}-\frac{(q^n-1)q^2}{(q-1)q^n}+\frac{(1+q^{n+1})(1+q^n)}{q^{2n}}.
	\]
	For $n>2$, we have
	\begin{align*}
		1-\frac{q^n-1}{(q-1)q^n}-\frac{1}{q^{2n}}
		&=\frac{q^{2n+1}-2q^{2n}+q^n-q+1}{(q-1)q^{2n}}\\
		&<\frac{q^{2n+1}-2q^{n+2}+q^n-q+1}{(q-1)q^{2n}}\\
		&=\frac{(q^n-1)q^2}{(q-1)q^n}-\frac{(1+q^{n+1})(1+q^n)}{q^{2n}},
	\end{align*}
	and therefore $A'_n<0$ in the case $\cP=\elliptic{n+1}$. This completes the proof.
\end{proof}
\begin{proof}[Proof of (C8)]
	We now assume $\cP=\elliptic{n+1}$ for $t=3$ and even $n>4$. As in (C7) we compute
	\[
	\frac{A'_{n-1}(q-1)^2(q+1)}{2\mu_{n-1}}=-q(q+1)(1-q^{2-n})(1-q^{4-n})+q^{5-3n}(1+q^{-2}),
	\]
	from which it is readily verified that $A'_{n-1}<0$, as required.
\end{proof} 	
\begin{proof}[Proof of (C9)]
	As in (C7) we compute the following. For $\cP=\elliptic{8}$ and $t=4$, we have
	\begin{align*}
		\frac{A'_6}{\mu_6}=-2q^{-5} (q + 1)^2 (q^2 + 1)(q^3 + q + 1)<0,
	\end{align*}
	for $\elliptic{9}$ and $t=5$, we have
	\begin{align*}
		\frac{A'_7}{\mu_7}=-2q^{-5} (q + 1)^4 (q^2 - q + 1) (q^2 + 1)^2<0,
	\end{align*}
	and for $\hermitianpol{12}$ and $t=3$, we have
	\begin{align*}
		\frac{A'_5}{\mu_5}=-q^{-7}(q + 1)^3 (q^2 - q + 1) (q^4 - q^3 + q^2 + 1)<0.
	\end{align*}
	In all cases we obtain the required nonexistence of $t$-Steiner systems.
\end{proof}

\appendix\section{Appendix: Identities for the \texorpdfstring{$Q$}{Q}-numbers}\label{appendix}

We now prove Lemmas~\ref {lem:QnumIdentity_Herm_hyp} and~\ref{lem:identityQnum_polar}. We will frequently use the identity
\[
\qbin{k}{j}\qbin{j}{i}=\qbin{k}{i}\qbin{k-i}{j-i}
\]
without specific reference.
\begin{proof}[Proof of Lemma~\ref{lem:identityQnum_polar}]
	Let $P_i(k)$ be as in~\eqref{eq:Pnum_dualpolarspaces} and $Q_k(i)$ be the corresponding $Q$-number, determined by~\eqref{eq:AskeyWilsonDuality}. We will prove
	\begin{align}\label{eq:identity_Pnum_polar}
		\sum_{i=0}^n \pbin{n-i}{j} P_i(k)
		=p^{k(n-j)}\pbin{n-k}{n-j}\prod_{\ell=1}^{n-j} (1+p^{\ell-k+e}).
	\end{align}
	By multiplying~\eqref{eq:identity_Pnum_polar} with $Q_k(\ell)$, taking the sum over~$k$, and using~\eqref{eq:PQnumorth}, we obtain the identity in the lemma. It remains to prove~\eqref{eq:identity_Pnum_polar}. For all $i,j=0,1,\dots,n$, we have
	\begin{align*}
		\sum_{i=0}^n \pbin{n-i}{j} P_i(k)
		&=\sum_{i=0}^n\sum_{\ell=0}^i (-1)^\ell\pbin{n}{k}^{-1}\pbin{n-i}{j}\pbin{n-i}{k-\ell}\pbin{n}{i}\pbin{i}{\ell} p^{\ell(\ell-i-e-1)+\binom{i+1}{2}+ie}\\
		&=\sum_{i=0}^n\sum_{\ell=0}^i (-1)^\ell\pbin{n}{k}^{-1}\pbin{n-i}{j}\pbin{n}{\ell}\pbin{n-\ell}{k-\ell}\pbin{n-k}{i-\ell} p^{\ell(\ell-i-e-1)+\binom{i+1}{2}+ie}.
	\end{align*}
	Interchanging the order of summation by putting $m=i-\ell$ gives us
	\begin{equation}\label{eqn:sum_P}
		\sum_{i=0}^n \pbin{n-i}{j} P_i(k)
		=\sum_{m=0}^{n-k}\left(\sum_{\ell=0}^k (-1)^\ell p^{\binom{\ell}{2}} \pbin{k}{\ell}\pbin{n-m-\ell}{j}\right)\pbin{n-k}{m} p^{\binom{m}{2}+m(e+1)}.
	\end{equation}
	To evaluate the inner sum, we use the $q$-Chu-Vandermonde identity
	\begin{align}\label{eq:qChu-Vandermonde1}
		\pbin{x+y}{z}=\sum_{i=0}^x p^{i(y-z+i)}\pbin{x}{i}\pbin{y}{z-i},
	\end{align}
	where $x,y,z$ are integers. Applying the inversion formula~\eqref{eq:qbininversion} to~\eqref{eq:qChu-Vandermonde1} reveals that
	\[
	\sum_{\ell=0}^x (-1)^\ell p^{\binom{\ell}{2}} \pbin{x}{\ell}\pbin{x-\ell+y}{z}=p^{x(y-z+x)}\pbin{y}{z-x}.
	\]
	Put $x=k$, $y=n-k-m$ and $z=j$ to obtain
	\[
	\sum_{\ell=0}^k (-1)^\ell p^{\binom{\ell}{2}}\pbin{k}{\ell}\pbin{n-m-\ell}{j}=p^{k(n-m-j)}\pbin{n-k-m}{j-k}.
	\]
	Substitute into~\eqref{eqn:sum_P} to give
	\begin{align}
		\sum_{i=0}^n \pbin{n-i}{j} P_i(k)
		&=\sum_{m=0}^{n-k}p^{k(n-m-j)}\pbin{n-k-m}{j-k}\pbin{n-k}{m} p^{\binom{m}{2}+m(e+1)}\notag\\
		&=\left(\sum_{m=0}^{n-j}\pbin{n-j}{m}p^{\binom{m}{2}+m(e+1)-km}\right) p^{k(n-j)}\pbin{n-k}{j-k}.\label{eq:Pnum_identity_proof1}
	\end{align}
	Applying the $q$-binomial theorem
	\[
	\sum_{i=0}^k q^{\binom{i}{2}}\qbin{k}{i}z^i=\prod_{i=0}^{k-1} (1+zq^i)
	\]
	to the sum on the right-hand side of~\eqref{eq:Pnum_identity_proof1} leads to the identity~\eqref{eq:identity_Pnum_polar}.
\end{proof}
\par
To prove Lemma~\ref{lem:QnumIdentity_Herm_hyp}, we require some identities involving the $q$-Pochhammer symbol. For a real number~$a$ and nonnegative integers $n,k$, we have
\begin{align}
	\qbin{n}{k}=&\,\frac{\pochq{q^{-n}}{k}}{\pochq{q}{k}} (-1)^k q^{kn-\binom{k}{2}}\label{eq:qbin_poch}\\
	\poch{a^2}{q^2}{k}=&\,\pochq{a}{k}\pochq{-a}{k}\label{eq:pochidentity_squared}\\
	\pochq{a}{2k}=&\,\poch{a}{q^2}{k}\poch{aq}{q^2}{k}\label{eq:pochidentity_2k}\\
	\pochq{a}{n+k}=&\,\pochq{a}{n}\pochq{aq^n}{k}\label{eq:pochidentity_indexsum}\\
	\pochq{a}{n-k}=\frac{\pochq{a}{n}}{\pochq{a^{-1}q^{1-n}}{k}}& (-a)^{-k} q^{\binom{k}{2}-nk+k}\quad\text{for $a\neq 0$} \label{eq:pochidentity_indexdiff}.
\end{align}
These identities can be found in~\cite{Koekoek}, for example.
\begin{proof}[Proof of Lemma~\ref{lem:QnumIdentity_Herm_hyp}]
	Let $P'_i(k)$  and $Q'_k(i)$ be as in~\eqref{eq:Pnum_Herm_hyp} and~\eqref{eq:Qnum_Herm_hyp}, respectively. To simplify notation, we set $a=q^{-1}c^{-1}b^{-2n}$. Recall that $(x)_i=(x;b)_i$. We will show that
	\begin{align}\label{eq:Pnumidentity_Herm_hyp}
		\sum_{i=0}^n \bbin{n-i}{j}P'_i(k)=b^{k(n-j)}\qbin{n-k}{n-j} \frac{(a^{-1} b^{-n-k})_{n-j}}{(q)_{n-j}}.
	\end{align}
	Multiplying~\eqref{eq:Pnumidentity_Herm_hyp} with $Q'_k(\ell)$, taking the sum over $k$, and using~\eqref{eq:PQnumorth} we obtain the identity in the lemma. It remains to prove~\eqref{eq:Pnumidentity_Herm_hyp}. First we rewrite the valencies~$v'_i$, given in Table~\ref{table:Pnum_parameters}, such that we have a similar form for $P'_i(k)$ in both association schemes. For $\hermitianpol{2n-1}$, we use \eqref{eq:qbin_poch} and \eqref{eq:pochidentity_squared} to obtain
	\begin{align*}
		v'_i=q^{i^2}\qqbin{n}{i}
		&=(-1)^iq^{i^2-2\binom{i}{2}+2ni} \frac{\poch{(-q)^{-n}}{-q}{i}\poch{-(-q)^{-n}}{-q}{i}}{\poch{-q}{-q}{i}\poch{q}{-q}{i}}\\
		&=(-1)^i (-q)^{\binom{i}{2}+i+ni}\gaussian{-q}{n}{i} \frac{\poch{-(-q)^{-n}}{-q}{i}}{\poch{q}{-q}{i}}.
	\end{align*}
	For $\hyperbolichalf$, we use~\eqref{eq:qbin_poch} and~\eqref{eq:pochidentity_2k} to obtain
	\begin{align*}
		v'_i&=q^{\binom{2i}{2}}\qbin{m}{2i}
		=q^{2im} \frac{\poch{q^{-m}}{q^2}{i}\poch{q^{-m+1}}{q^2}{i}}{\poch{q^2}{q^2}{i}\poch{q}{q^2}{i}}.
	\end{align*}
	For even $m=2n$, we have
	\[
	v'_i=(-1)^iq^{2in+2\binom{i}{2}}\qqbin{n}{i} \frac{\poch{q^{-2n+1}}{q^2}{i}}{\poch{q}{q^2}{i}}
	\]
	and for odd $m=2n+1$, we obtain
	\[
	v'_i=(-1)^iq^{2in+2i+2\binom{i}{2}}\qqbin{n}{i} \frac{\poch{q^{-2n-1}}{q^2}{i}}{\poch{q}{q^2}{i}}.
	\]
	Hence, in all cases, we can write
	\[
	v'_i=(-q)^i c^i b^{\binom{i}{2}+ni} \bbin{n}{i} \frac{\pochb{c^{-1}b^{-n}}{i}}{\pochb{q}{i}}.
	\]
	Now, from the expression~\eqref{eq:Pnum_Herm_hyp} for $P'_i(k)$, we obtain
	\begin{align*}
		\sum_{i=0}^n \bbin{n-i}{j}P'_i(k)
		&=\sum_{i=0}^n \bbin{n-i}{j}(-q)^i c^i b^{\binom{i}{2}+ni}  \bbin{n}{i} \frac{(c^{-1}b^{-n})_i}{(q)_i} \hyp{b^{-i},b^{-k},a b^k}{b^{-n},c^{-1}b^{-n}}{b;b}\\
		&=\bbin{n}{j}\sum_{i,\ell\geq 0} \bbin{n-j}{i}(-q)^i c^i b^{\binom{i}{2}+ni+\ell} \frac{(c^{-1}b^{-n})_i (b^{-i})_\ell (b^{-k})_\ell (a b^k)_\ell}{(q)_i (b^{-n})_\ell (c^{-1}b^{-n})_\ell (b)_\ell}.
	\end{align*}
	From~\eqref{eq:qbin_poch} we have
	\[
	\bbin{n-j}{i}\frac{(b^{-i})_\ell}{(b)_\ell}
	=(-1)^\ell b^{\binom{\ell}{2}-i\ell}\bbin{n-j}{\ell}\bbin{n-j-\ell}{i-\ell},
	\]
	and therefore
	\begin{align}\label{eq:identityPnum1}
		\sum_{i=0}^n \bbin{n-i}{j}P'_i(k)=
		\bbin{n}{j}\sum_{\ell\geq 0} (-1)^\ell b^{\binom{\ell}{2}+\ell}\bbin{n-j}{\ell} \frac{(b^{-k})_\ell (a b^k)_\ell}{(b^{-n})_\ell (c^{-1}b^{-n})_\ell}\; S_\ell,
	\end{align}
	where
	\[
	S_\ell=\sum_{i\geq0} (-q)^i c^i b^{\binom{i}{2}+i(n-\ell)} \bbin{n-j-\ell}{i-\ell} \frac{(c^{-1}b^{-n})_i }{(q)_i} .
	\]
	By interchanging the order of summation and then applying~\eqref{eq:pochidentity_indexsum}, we obtain
	\begin{align*}
		S_\ell&=\sum_{i=0}^{n-\ell}(-q)^{i+\ell} c^{i+\ell} b^{\binom{i+\ell}{2}+(i+\ell)(n-\ell)}\bbin{n-j-\ell}{i} \frac{(c^{-1}b^{-n})_{i+\ell}}{(q)_{i+\ell}}\\
		&=\sum_{i=0}^{n-\ell} (-q)^{i+\ell} c^{i+\ell} b^{\binom{i+\ell}{2}+(i+\ell)(n-\ell)}\bbin{n-j-\ell}{i} \frac{(c^{-1}b^{-n})_\ell(c^{-1}b^{-n+\ell})_i}{(q)_\ell(q b^\ell)_i}.
	\end{align*}
	Using~\eqref{eq:qbin_poch}, this sum becomes
	\begin{align*}
		S_\ell&=(-q)^\ell c^\ell b^{\binom{\ell}{2}-\ell^2+n\ell}\frac{(c^{-1}b^{-n})_\ell}{(q)_\ell}
		\sum_{i=0}^{n-\ell} (qcb^{2n-j-\ell})^i \frac{(b^{-(n-j-\ell)})_i(c^{-1} b^{-n+\ell})_i}{(b)_i(qb^\ell)_i}\\
		&=(-q)^\ell c^\ell b^{\binom{\ell}{2}-\ell^2+n\ell}\frac{(c^{-1}b^{-n})_\ell}{(q)_\ell} 
		\hyper{2}{1}{b^{-(n-j-\ell)},c^{-1}b^{-n+\ell}}{qb^\ell}{b;qcb^{2n-j-\ell}}.
	\end{align*}
	The hypergeometric function $\mbox{}_2\phi_1$ can be evaluated by using the $q$--Chu--Vandermonde identity
	\[
	\hyper{2}{1}{b^{-k},x}{y}{b;\frac{yb^k}{x}}=\frac{(x^{-1}y)_k}{(y)_k}
	\]
	(see \cite[(1.11.4)]{Koekoek}, for example), which implies that
	\[
	S_\ell=(-q)^\ell c^\ell b^{\binom{\ell}{2}-\ell^2+n\ell}\frac{(c^{-1}b^{-n})_\ell (qcb^n)_{n-j-\ell}}{(q)_\ell (qb^\ell)_{n-j-\ell}}.
	\]
	Substitute into~\eqref{eq:identityPnum1} to obtain
	\begin{align}\label{eq:sumwoname}
		\sum_{i=0}^n \bbin{n-i}{j}P'_i(k)
		=\bbin{n}{j}\sum_{\ell\geq 0} q^\ell c^\ell b^{n\ell}\bbin{n-j}{\ell}   \frac{(b^{-k})_\ell (a b^k)_\ell (qcb^n)_{n-j-\ell}}{(b^{-n})_\ell (q)_\ell (qb^\ell)_{n-j-\ell}}.
	\end{align}
	From~\eqref{eq:pochidentity_indexsum} we have
	\begin{align}\label{eq:poch1}
		(qb^\ell)_{n-j-\ell}=\frac{(q)_{n-j}}{(q)_\ell}
	\end{align}
	and from~\eqref{eq:pochidentity_indexdiff} we find that
	\begin{align}\label{eq:poch2}
		(qcb^n)_{n-j-\ell}=\frac{(qcb^n)_{n-j}}{(q^{-1}c^{-1}b^{-2n+1+j})_\ell} (-qcb^n)^{-\ell} b^{\binom{\ell}{2}-(n-j)\ell+\ell}.
	\end{align}
	By substituting~\eqref{eq:poch1} and~\eqref{eq:poch2} into~\eqref{eq:sumwoname} and using~\eqref{eq:qbin_poch}, we have
	\begin{align*}
		\sum_{i=0}^n \bbin{n-i}{j}P'_i(k)
		&=\bbin{n}{j}\sum_{\ell\geq 0}(-1)^{\ell} b^{\binom{\ell}{2}-(n-j)\ell+\ell}  \bbin{n-j}{\ell} \frac{(b^{-k})_\ell (a b^k)_\ell (qcb^n)_{n-j}}{(b^{-n})_\ell (q)_{n-j}(q^{-1}c^{-1}b^{-2n+1+j})_\ell}\\
		&=\bbin{n}{j}\frac{(qcb^n)_{n-j}}{(q)_{n-j}}\sum_{\ell\geq 0}  b^\ell  \frac{(b^{-(n-j)})_\ell (b^{-k})_\ell (a b^k)_\ell }{(b)_\ell (b^{-n})_\ell (q^{-1}c^{-1} b^{-2n+1+j})_\ell}\\
		&=\bbin{n}{j}\frac{(qcb^n)_{n-j}}{(q)_{n-j}}\hyp{b^{-(n-j)},b^{-k},a b^k}{b^{-n},q^{-1}c^{-1} b^{-2n+1+j}}{b;b}.
	\end{align*}
	The hypergeometric function $\mbox{}_3\phi_2$ on the right hand side can be computed via the $q$--Pfaff--Saalsch\"utz formula
	\[
	\hyp{b^{-i},x,y}{z,xyz^{-1}b^{1-i}}{b;b}=\frac{(x^{-1}z)_i (y^{-1}z)_i}{(z)_i (x^{-1}y^{-1}z)_i}
	\]
	(see \cite[(1.11.9)]{Koekoek}, for example). Note that $qcb^n=a^{-1}b^{-n}$. Therefore, we obtain
	\[
	\sum_{i=0}^n \bbin{n-i}{j}P'_i(k)=\bbin{n}{j}\frac{(b^{-(n-k)})_{n-j} (qcb^{n-k})_{n-j}}{(q)_{n-j}(b^{-n})_{n-j}}.
	\]
	Applying~\eqref{eq:qbin_poch} to $\bbin{n}{j}=\bbin{n}{n-j}$ and using~\eqref{eq:qbin_poch} one more time leads to the identity~\eqref{eq:Pnumidentity_Herm_hyp}.
\end{proof}

%

\newcommand{\etalchar}[1]{$^{#1}$}
\providecommand{\bysame}{\leavevmode\hbox to3em{\hrulefill}\thinspace}
\providecommand{\MR}{\relax\ifhmode\unskip\space\fi MR }
\providecommand{\MRhref}[2]{%
	\href{http://www.ams.org/mathscinet-getitem?mr=#1}{#2}
}
\providecommand{\href}[2]{#2}

\end{document}